\newtheorem{theorem}{Theorem}[section]
\newtheorem{lemma}[theorem]{Lemma}
\numberwithin{equation}{section}  
  \newcounter{mnote}
  \let\oldmarginpar\marginpar
    \renewcommand\marginpar[1]{\-\oldmarginpar[\raggedleft\footnotesize #1]%
    {\raggedright\footnotesize #1}}
\definecolor{myblue}{rgb}{0.2,0.2,0.7}
\definecolor{mygreen}{rgb}{0,0.6,0}
\definecolor{mycyan}{rgb}{0,0.6,0.6}
\definecolor{myred}{rgb}{0.9,0.2,0.2}
\definecolor{mymagenta}{rgb}{0.9,0.2,0.9}
\definecolor{mywhite}{rgb}{1.0,1.0,1.0}
\definecolor{myblack}{rgb}{0.0,0.0,0.0}
\newcommand{\beq}{\begin{equation}}
\newcommand{\eeq}{\end{equation}}
\newcommand{\beqa}{\begin{eqnarray}}
\newcommand{\eeqa}{\end{eqnarray}}
\newcommand{\N}{{\mathbb N}}       
\newcommand*{\pref}[1]{(\ref{#1})}
\newcommand*{\uh}[1]{u_h^{#1}}
\newcommand*{\Vnorm}[1]{\| #1\|_h}
\newcommand*{\paren}[1]{\left(#1\right)}
\def\aa{\alpha}
\def\ss{\sigma}
\def\Dd{\Delta}
\def\Om{\Omega}
\def\pp{\partial}
\begin{document}

\title[The Navier-Stokes-Voight Model for Image Inpainting]
      {The Navier-Stokes-Voight Model for Image Inpainting}

\author[M.A. Ebrahimi]{M.A. Ebrahimi}
\email{maebrahi@math.ucsd.edu}

\author[M. Holst]{Michael Holst}
\email{mholst@math.ucsd.edu}
\thanks{MH was supported in part by NSF Awards~0715146 and 0915220.}

\author[E. Lunasin]{Evelyn Lunasin}
\email{elunasin@math.ucsd.edu}
\thanks{ME and EL were supported in part by NSF Award~0715146.}

\address{Department of Mathematics\\
         University of California San Diego\\ 
         La Jolla CA 92093}

\date{\today}

\keywords{Navier-Stokes-Voight, sub-grid scale turbulence model, fluid mechanics, image inpainting}

\begin{abstract}
In this paper we investigate the use of the 2D Navier-Stokes-Voight (NSV) model for use in algorithms and explore its limits in the context of image inpainting.
We begin by giving a brief review of the work of Bertalmio {\it et. al.} in 2001 on exploiting an analogy between the image intensity function for the image inpainting problem and the stream function in 2D incompressible fluid.
An approximate solution to the inpainting problem was then obtained by numerically approximating the steady state solution of the 2D Navier-Stokes vorticity transport equation, and simultaneously solving the Poisson problem between the vorticity and stream function, in the region to be inpainted.
This elegant approach allows one to produce an approximate solution to the image inpainting problem by using techniques from computational fluid dynamics (CFD).
Recently, the three-dimensional (3D) Navier-Stokes-Voight (NSV) model of viscoelastic fluid, was suggested by Cao, {\it et. al.} as an inviscid regularization to the 3D Navier-Stokes equations.
We give some background on the NSV mathematical model, describe why it is a good candidate sub-grid-scale turbulence model, and propose this model as an alternative for image inpainting.
We describe an implementation of inpainting use the NSV model, and present numerical results comparing the resulting images when using the NSE and NSV for inpainting.
Our results show that the NSV model allows for a larger time step to converge to the steady state solution, yielding a more efficient numerical process when automating the inpainting process.
We compare quality of the resulting images using subjective measure (human evaluation) and objected measure (by calculating the peak signal-to-noise ratio (PSNR), also known as peak signal-to-reconstructed measure).
We also present some new theoretical results based on energy methods comparing the sufficient conditions for stability of the discretization scheme for the two model equations.
These theoretical and numerical studies shed some light on what can be expected from this category of approach when automating the inpainting problem.
\end{abstract}

\maketitle

\vspace*{-1.10cm}
{\footnotesize
\tableofcontents
}

\section{Introduction}\label{intro}

Image inpainting ($\it{inpainting}$ for short) is the process of correcting a damaged image by filling in the missing or altered data of an image with a better suited data for that region.
The idea is to fill in the damaged part of an image using the information from surrounding areas.
The goal of researchers in this area is to develop algorithms for automatic digital inpainting which mimics the basic manual techniques used by a professional restorer.
In 2001, Bertalmio, {\it et. al.} built a method based on an analogy between image intensity and the stream function in a two-dimensional (2D) incompressible fluid.
The solution to the inpainting problem is then obtained by numerically approximating the steady state solution of the 2D Navier-Stokes vorticity transport equation, for some small viscosity, and simultaneously solving the Poisson problem between the vorticity and stream function in the region to be inpainted.
This approach enables automation of the inpainting process by which inpainting is done using techniques from computational fluid dynamics (CFD).
However, difficulties which arise in CFD are also inherited.

For inpainting, the viscosity in the fluid model should be as small as possible to preserve edges.
However, CFD simulation of high Reynolds number flows (flows with very small viscosity) requires very fine mesh in order to resolve the wide range of scales of motion contributing to the dynamics of the flow.
Sub-grid scale modeling is an alternative that reduces the computational requirements when simulating turbulent flows.
Recently, the three-dimensional (3D) Navier-Stokes-Voight (NSV) model of viscoelastic fluid,   
\begin{equation}
\aligned
-\alpha^2\Delta u_t+u_t -\nu \Delta u  + (u\cdot\nabla)u +\nabla p &=f, \\
  \nabla\cdot u&=0,
\endaligned
\label{nsv}
\end{equation}
with initial condition $u(x,0) = u^{in}(x)$ was suggested by Cao, {\it et. al.} as an inviscid regularization to the 3D Navier-Stokes equations (NSE), where the length-scale $\alpha$ is considered as the regularizing parameter, $u$ is the velocity of the fluid with viscosity $\nu>0$, $p$ is the pressure and $f$ is the body force.
Note that when $\alpha=0$, we recover the Navier-Stokes equations of motion.
The system (\ref{nsv}) was first introduced in 1973 by Oskolkov (see~\cite{Osk73,Osk80}) as a model of a motion of linear, viscoelastic fluid.
In that setting, $\alpha$ is thought of as a length-scale parameter characterizing the elasticity of the fluid.
The 3D NSV model is shown to be globally well-posed~\cite{Osk73,Osk80} and has a finite-dimensional global attractor~\cite{KT07}, making it an attractive sub-grid scale turbulence model for numerical simulation.
In the presence of physical boundaries, the above regularization of the NSE is different in nature from other alpha regularization models in~\cite{CHT05,CLT06,CFHOT98,CFHOT99a,CFHOT99b,CHOT05,ILT06,LL06,OlTi07} (see also Section \ref{NSV-section}), because it does not require additional boundary conditions.

In this paper, we investigate a new approach which arises by combining the recent technique in~\cite{BBS01}, which uses ideas from classical fluid dynamics to propagate isophote lines, and recent results in~\cite{CLT06,KLT07,KT07}, which studies the NSV model of viscoelastic fluid as a candidate sub-grid scale turbulence model for purposes of numerical simulation.
Other PDEs have been proposed recently for example in~\cite{BoMarz07,TscDer05} which are possibly considered to be more efficient and perhaps easier to implement.
Of particular interest to us in this area is to explore different sub-grid scale turbulent models applied to image inpainting.
For the reasons outlined above, we have chosen this particular turbulence model to study in the context of inpainting.
Using this new model, we will study the effect of the regularization parameter $\alpha$ (which one can think of as the filter width) on quality and efficiency of inpainting automation.
As part of our investigation of 2D NSV for inpainting, we examine the differences between the 2D NSV model and the 2D NSE for inpainting.
We look at semi-implicit forward-time upwind method for both NSV and NSE and compare their stability and efficiency.
We will refer to these numerical methods by their corresponding mathematical model: NSV and NSE numerical model.
Our numerical results show the NSV model, in comparison to NSE, yields a more stable solution to the inpainting process.
That is, the NSV can be computed with larger time-steps, reducing the computational expense in the automated inpainting procedure.
However, we also study the efficiency of the calculation of the model equations as well as the quality of the resulting images.
We hope that these theoretical and numerical studies shed some light on what can be expected from this category of approach when automating the inpainting problem.

This paper is organized as follows.
In Section \ref{prelim} we review the work of Bertalmio {\it et. al.} in~\cite{BBS01} on exploiting an analogy between image intensity and the stream function in 2D incompressible fluid.
In Section \ref{NSV-section} we give some background on the NSV mathematical model, discuss why it is a good candidate sub-grid-scale turbulence model, and argue that it makes for a plausible alternative model for inpainting.
In Section \ref{results-section} we describe our implementation, and then present our numerical results, comparing the resulting images when using the NSE and NSV for inpainting.
The results show the NSV model allows for larger time steps to converge to the steady state solution yielding a more efficient numerical process when automating the inpainting process.
We compare quality of the resulting images using subjective measure (human evaluation) and objected measure (by calculating the peak signal-to-noise ratio (PSNR), also known as peak signal-to-reconstructed measure).
We also give approximate operation counts needed by the two models to converge to steady state solution for some fixed tolerance.
In Section \ref{sec:theory} we present some supporting theoretical results.
In Section \ref{subsec:uniqueness}, we summarize some existing results on the dependence of uniqueness of the solution to the inpainting problem on the image at the boundary, viscosity and on the size of the inpainting region.      
In Sections \ref{subsec:stab_anal_nse}, \ref{subsec:stab_anal_nsv}, and \ref{subsec:stab_anal_nse_semi}, we establish some new results comparing the NSE and NSV in terms of stability conditions.
We describe the discretization under consideration, and following~\cite{Temam01}, derive sufficient conditions for stability based on energy methods, establishing results that support our numerical results.
In Section~\ref{conclusion} we summarize our results and outline some future directions for this work.  

\section{Image Inpainting Using Fluid Models}\label{prelim}

 In this section we present a summary of the approach in~\cite{BBS01} for automating the inpainting process motivated by ideas from classical fluid dynamics.
The idea is to propagate isophote lines from the exterior into the region to be inpainted, which we denote as $\Omega$,
using the 2D NSE for fluid dynamics.
Throughout the paper, we denote the digital gray-scale image by $I$, an $m\times n$ matrix with
gray-scale value 0 to 255 at each pixel, and $D$ be the set of points $(x,y)$ where $I$ is defined.
The value 0 represents black and the value 255 represents white.
The values in between are different gray levels between black and white.

\begin{figure}[ht]
\begin{center}
\includegraphics[scale=0.4]{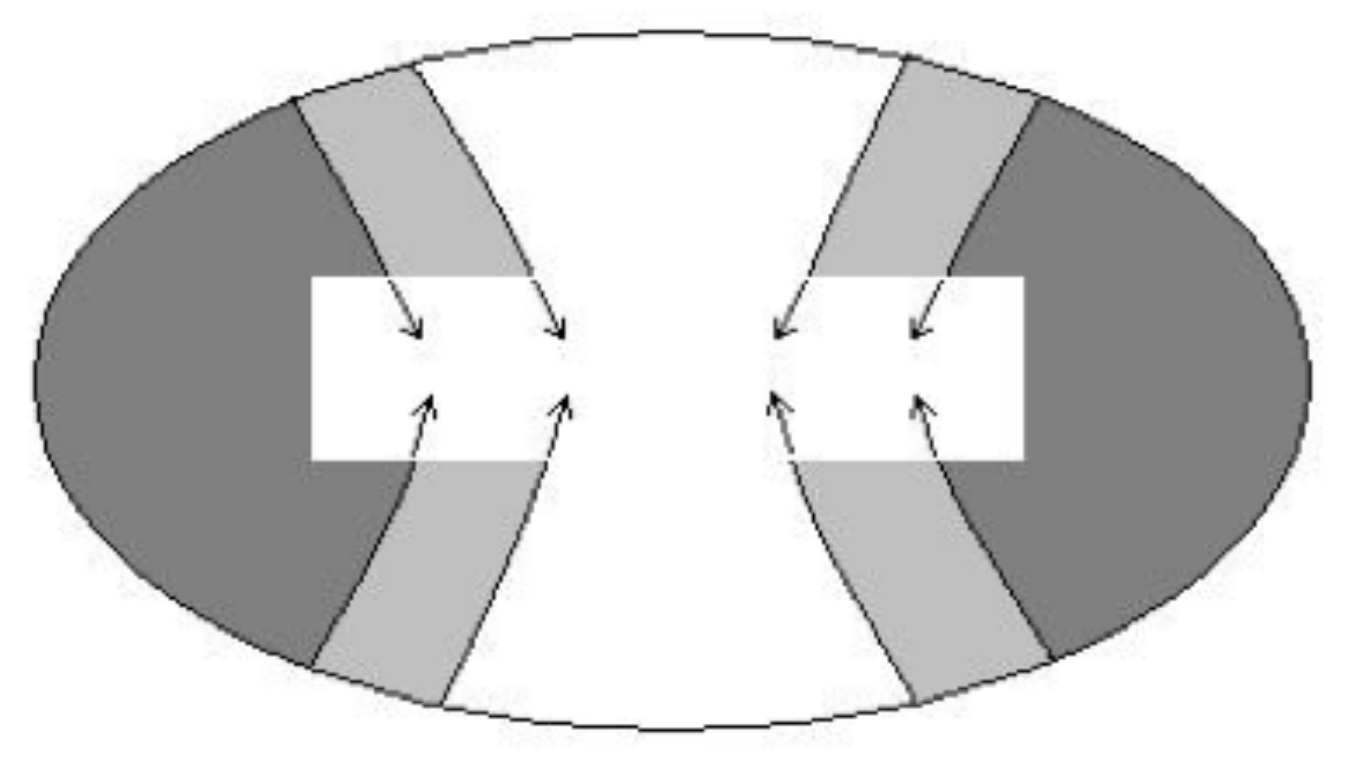}
\caption{\small\em This image shows the direction of the isophotes and where the smoothness should propagate.}
\label{isophotes}
\end{center}
\end{figure}

To solve the inpainting problem, a common technique for restorers (see e.g.,\cite{BSCB00}) is to extend edges from the
boundary of $\Omega$, filling in the intra-region with the correct color gradient.
For example, in Figure~\ref{isophotes}, the direction of the isophotes (level lines of equal color gradients) determines where the smoothness should propagate.
To see how we can automate this technique, we first introduce some basic mathematical concepts.
Mathematically, the direction of {\it isophotes} (level curves of equal gray-levels) can be represented as $\nabla^{\perp}I$, which indicates the
direction of zero change, and the $\it{smoothness}$ of the
image, can be represented by $\Delta I$, where $\Delta$ is the usual Laplacian operator (see~\cite{BBS01} and references therein). 

\subsection{Inpainting Using the Navier-Stokes Equations}

In~\cite{BSCB00}, Bertalmio, {\em et. al.} proposed that the solution $I^*$ can be approximated with the steady solution to the PDE of the form
\begin{equation}
I_t= \nabla^{\perp}I\cdot \nabla \Delta I +\nu\nabla\cdot(g(|\nabla I|)\nabla I),
\end{equation}
for small $\nu >0$, where the anisotropic diffusion is added to preserve the edges.  We will list below a few diffusivity functions $g$ that are used in the literature.  To be more precise, the main goal is to find the solution $I^*$ such that the level curves of $\Delta I^*$ almost parallel to $\nabla^\perp I^*$, that is,
\begin{equation}
\nabla^{\perp}I^*\cdot \nabla \Delta I^* \simeq 0.
\label{rule}
\end{equation}
Bertalmio {\em et. al.}~\cite{BBS01} exploited an analogy between
the image of intensity function $I$ and the stream
function in a 2D incompressible fluid. To see this, we recall the vorticity-stream formulation of the 2D NSE,
\begin{equation}\label{vte}
\frac{\partial \omega}{\partial t} + v \cdot \nabla \omega = \nu
\Delta \omega.
\end{equation}
Here $v=\nabla^{\perp} \Psi$ is the velocity, where $\Psi$ is the stream function and $\omega=\nabla\times v$ is the vorticity.  If the viscosity $\nu$ is zero, the steady state
solution for the stream-function $\Psi$ in 2D satisfies

\begin{equation}
\label{sss} \nabla^{\perp} \Psi \cdot \nabla \Delta \Psi = 0.
\end{equation}\\
The similarity between (\ref{rule}) and (\ref{sss}) allows one to develop methods
for the inpainting problem using techniques from fluid dynamics.  For the image inpainting problem, instead of simulating (\ref{vte}),
we compute the following numerically:
\begin{equation}\label{vtei}
\frac{\partial \omega}{\partial t} + v \cdot \nabla \omega =
\nu\nabla\cdot (g(|\nabla \omega|)\nabla \omega),
\end{equation}
where $g$ accounts for the anisotropic diffusion (edge preserving
diffusion) to sharpen the image. At each time step, we solve the
Poisson equation:
\begin{equation}
\label{pois}
\Delta I = \omega, \ \ \ I|_{\partial \Omega} = I_0,
\end{equation}
where the boundary values $I_0$ are derived from the values of $I$ in $D - \Omega$.

We recall that other PDEs have been proposed recently (cf.~\cite{BoMarz07,TscDer05}) which have been shown to more efficient and perhaps easier to implement.
Here, our main interest is to explore different sub-grid scale turbulent models applied to image inpainting.
The idea is to test different sub-grid scale turbulence models instead of simulating the NSE (or the equation in (\ref{vtei})) for purposes of finding the solution to the image inpainting problem for reduced computational requirements.
Modifying the function $g$ appropriately in (\ref{vtei}) can also add to the accuracy of preserving the edges and hence give a more accurate image inpainting.
In (\ref{vtei}) we noted that for the image inpainting, the dissipation term in the NSE was modified to accommodate anisotropic diffusion. If $g=1$, we recover the usual NSE with isotropic diffusion.
There has been extensive analysis of various types of anisotropic filtering, see for example~\cite{YXTK96}.
In our numerical simulation of equation (\ref{vtei}) we have used several different diffusivity functions, namely,
\begin{equation*}
g(s) = \left[1 + \left(\dfrac{s}{k^2}\right)^2\right]^{-1},
\quad \quad
g(s) = \exp\left(-\dfrac{s}{k^2}\right),
\quad \quad
g(s) = \left[1 + \left(\dfrac{s}{k^2}\right)\right]^{-1},
\end{equation*}
where $k$ is a predefined diffusion parameter.  For the particular test cases performed, we observed no significant difference in the numerical solution to the image inpainting problem.

\begin{figure*}[htpb]
\begin{center}
\label{imageinpainting}
\subfigure[]{
\includegraphics[scale=0.48]{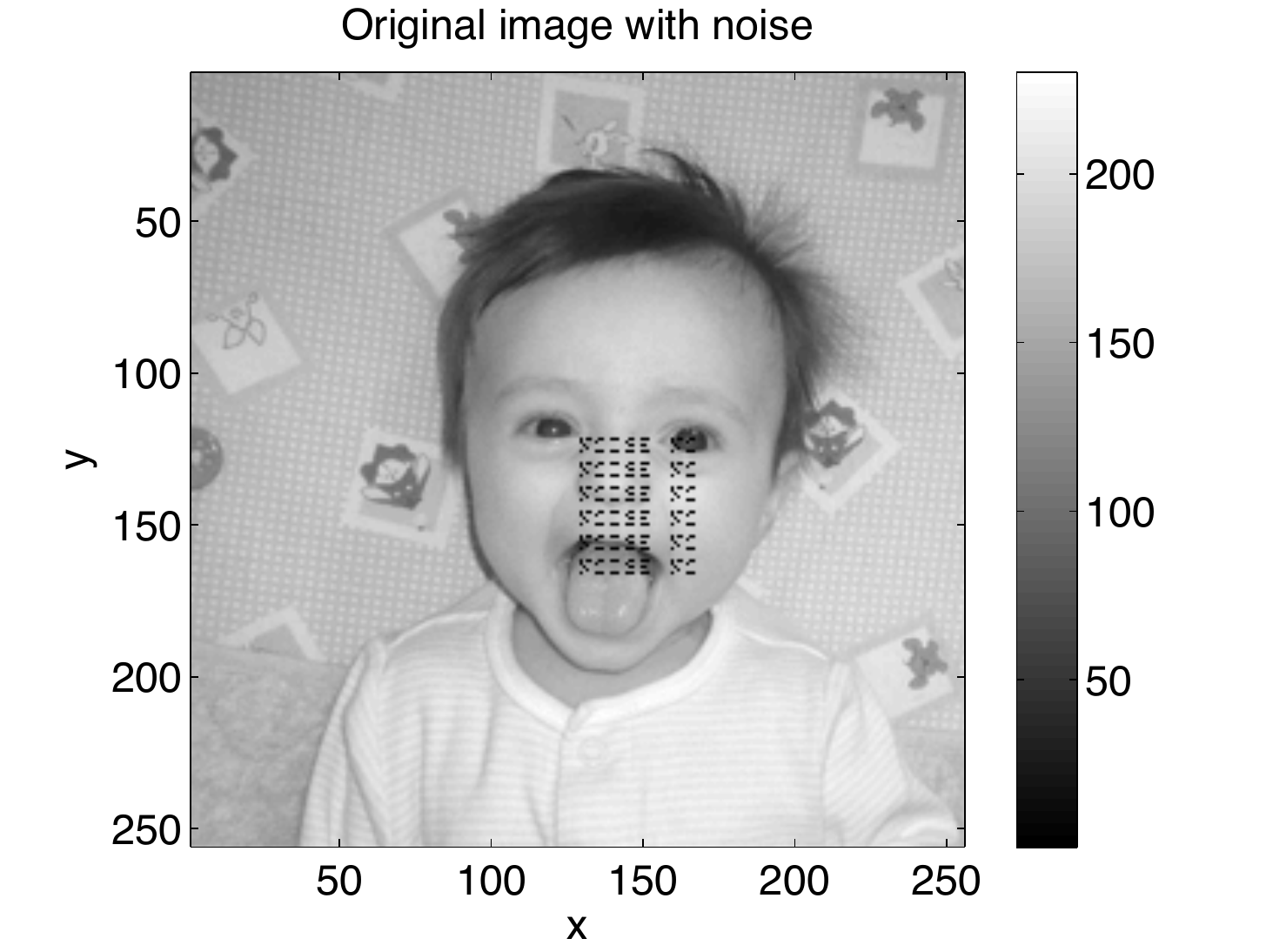}
}
\subfigure[]{
\includegraphics[scale=0.48]{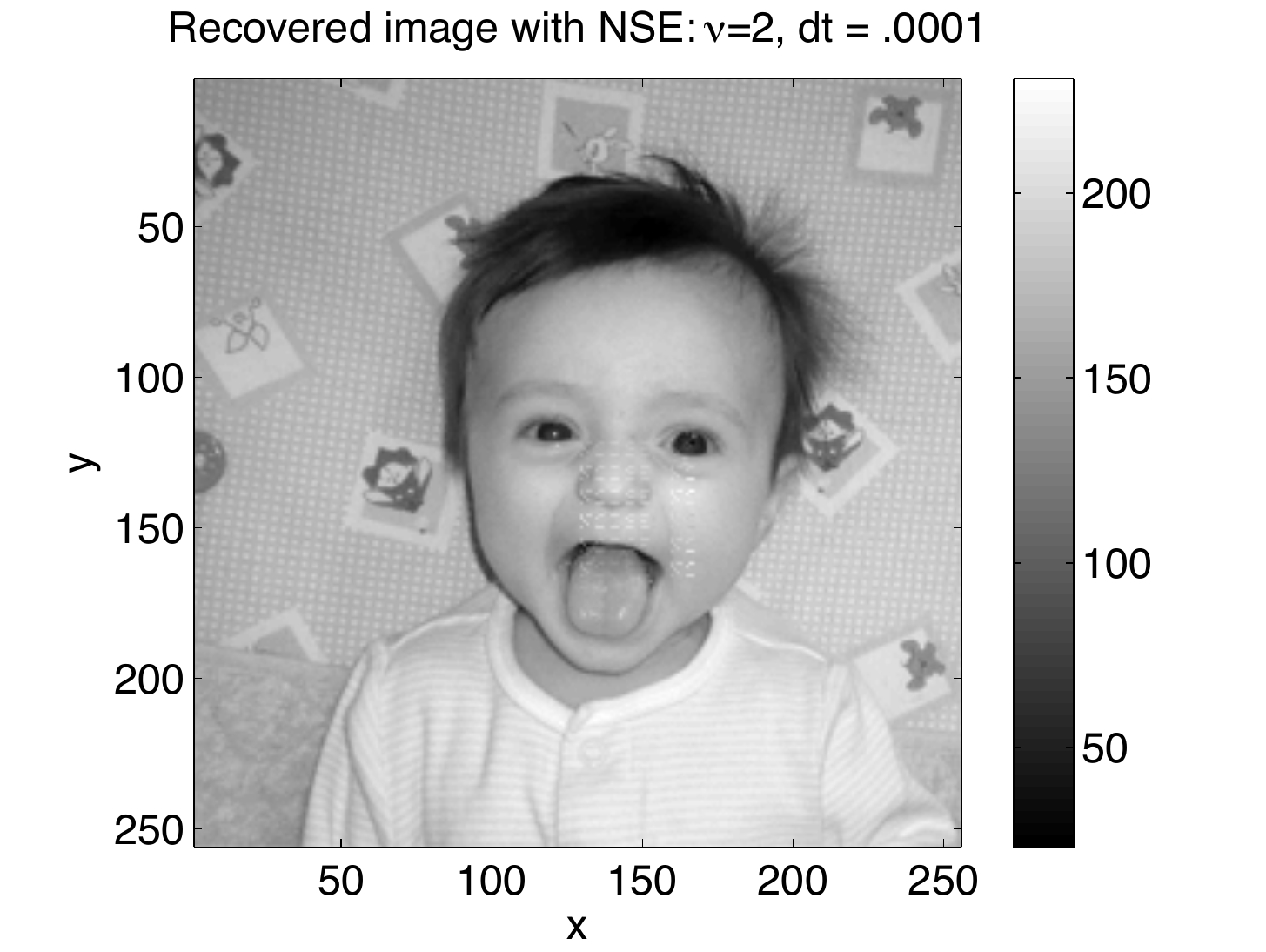}
}
\caption{\small\em This is an example of image inpainting resulting from the steady state solution of the Navier-Stokes equations (with anisotropic diffusion to preserve edges) with viscosity $\nu=2$.  The time-step $dt$ was set to .0001 to guarantee convergence of the numerical solution.}
\end{center}
\end{figure*}
Just as an illustration of the idea, we give a simple numerical experiment. In Figure~\ref{imageinpainting}a, we took the photo of Alexander Ebrahimi  with a noise mask as a test case.  We run the Navier-Stokes equations (NSE) with anisotropic diffusion until its steady state~\cite{BBS01}, and recovered the picture in Figure~\ref{imageinpainting}b.  The iterative inpainting process written in MATLAB took 24 iterations with a tolerance of 0.0001, in about 1.84 seconds on a laptop computer with a Intel(R) 1.60 GHz CPU.  The grid for the inpainting region is $49\times 59$ pixels.


\subsection{Alpha Models and the Navier-Stokes-Voight (NSV) Equations}\label{NSV-section}

The NSV equations \pref{nsv} were suggested as a regularization model for the 3D NSE, where the length-scale $\alpha$ is considered as the regularizing parameter.  The system (\ref{nsv}) was first introduced in 1973 by Oskolkov (see~\cite{Osk73,Osk80}) as a model of a motion of linear, viscoelastic fluid.  In that setting, $\alpha$ is thought of as a length-scale parameter characterizing the elasticity of the fluid.  As noted in~\cite{KLT07,KT07}, the addition of the term $-\alpha^2 \Delta u_t$ regularizes the 3D NSE which makes it globally well-posed~\cite{CLT06,Osk73} and  it changes the parabolic character of the  original Navier-Stokes equations where in this case one does not observe an immediate smoothing of the solutions as usually expected in parabolic PDEs.  Instead, the equations behave like a damped hyperbolic system.  Despite the damped hyperbolic behavior, it was shown in~\cite{KLT07} that the solutions to the 3D NSV equations lying on the global attractor posses a dissipation range.  This property supports the claim that indeed, the NSV equations can be used as a sub-grid scale turbulence model.  This type of inviscid regularization (that is, a regularization technique without introducing extra viscous or hyperviscous terms) has been used for 2D surface quasi-geostrophic (SQG) model (\cite{KhTi07}), see also~\cite{BLT07} for the Birkhoff-Rott equation, induced by the 2D Euler-$\alpha$ equations for vortex sheet dynamics and~\cite{LiTi07} for the 3D Magnetohydrodynamics (MHD) equations.

There is an interesting literature behind the rediscovery of the NSV equation as a turbulence model.
It can be traced back from the early study of 3D Navier-Stokes-$\alpha$ (NS-$\alpha$) turbulence model in 1998 (also known as the viscous Camassa-Holm equations (VCHE) and Lagrangian averaged Navier-Stokes-$\alpha$ (LANS-$\alpha$) model, which can be written as~\cite{FHT01,FHT02,HMR98,MM03}
\begin{equation}\label{nsa}
\aligned
\partial_t v - \nu\Delta v - u\times\nabla\times v &= -\nabla p + f,\\
\nabla \cdot u =  \nabla \cdot v &= 0,\\
v &= (I-\alpha^2\Delta)u,
\endaligned
\end{equation}
with $u(x,0) = u^{in}(x)$.
The parameter $\alpha$ can be viewed as the length scale associated with filter width smoothing $v$ to obtain $u$, with filter associated with the Green's function (Bessel potential) of the Helmholtz operator $(I-\alpha^2\Delta)$.
The system is supplemented with periodic boundary conditions in a box $[0,L]^3$.

The inviscid and unforced version of the 3D NS-$\alpha$ was introduced in~\cite{HMR98} based on the Hamilton variational principle subject to the incompressibility constraint $\mbox{div}\; v =0$.
By adding the viscous term $-\nu\Delta v$ and the forcing $f$ in an {\it ad hoc} fashion, the authors in~\cite{CFHOT98,CFHOT99a,CFHOT99b} and~\cite{FHT02} obtain the NS-$\alpha$ system which they named, at the time, the viscous Camassa-Holm equations (VCHE), also known as the Lagrangian averaged Navier-Stokes-$\alpha$ model (LANS-$\alpha$).
In~\cite{CFHOT98,CFHOT99a,CFHOT99b} it was found that steady state solutions of the 3D NS-$\alpha$ model compared well with averaged experimental data from turbulent flows in channels and pipes for large Reynolds numbers.
This led researchers~\cite{CFHOT98,CFHOT99a,CFHOT99b} to suggest that the NS-$\alpha$ model be used as a closure model for the Reynolds averaged equations.
Since then, an entire family of `$\alpha$'- models has been found which provide similar successful comparison with empirical data -- among these are the Clark-$\alpha$ model~\cite{CHT05}, the Leray-$\alpha$ model~\cite{CHOT05}, the modified Leray-$\alpha$ model~\cite{ILT06} and the simplified Bardina model~\cite{CLT06, LL06} (see also~\cite{OlTi07} for a family of similar models).
We focus our attention on the simplified Bardina model:
\begin{equation}\label{Bardina}
\aligned
\partial_t v  -\nu \Delta v+ (u\cdot\nabla) u &= -\nabla p + f,\\
\nabla \cdot u =  \nabla \cdot v &= 0,\\
v &= u-\alpha^2\Delta u,
\endaligned
\end{equation}
with initial condition $u(x,0) = u^{in}(x)$.
The equation above was introduced and studied in~\cite{LL06} supplemented with periodic boundary conditions.  Notice that consistent with all the other alpha models, the above system is the Navier-Stokes system of equations when
$\alpha = 0$, i.e. $u = v$.  In~\cite{CLT06}, the viscous and inviscid simplified Bardina models were shown to be globally well-posed.  It was also shown that the viscous simplified Bardina model has a finite dimensional global attractor.
In~\cite{CLT06} it was observed that the inviscid simplified Bardina model, is equivalent to the following modification of the 3D Euler equations
\begin{equation}
\aligned
-\alpha^2\Delta\dfrac{\partial u}{\partial t} + \dfrac{\partial u}{\partial t} + (u\cdot \nabla)u + \nabla p &= f, \\
\nabla\cdot u &= 0,
\endaligned
\end{equation}
with initial condition $u(x,0)=u^{in}$.
In particular, it is equal to the Euler equations when $\alpha = 0$.
Following this observation, Cao, {\it et. al.}~\cite{CLT06} proposed the inviscid simplified Bardina model as regularization of the 3D Euler equations that could be used for simulations of 3D inviscid flows.
Inspired by the above model, Cao, {\it et. al} then
proposed the following regularization of the 3D Navier-Stokes
equations
\begin{equation}\label{new-mod}
\aligned
-\alpha^2\Delta\dfrac{\partial u}{\partial t} +
\dfrac{\partial u}{\partial t} -\nu\Delta u+
(u\cdot \nabla)u + \nabla p &= f, \\
\nabla\cdot u &= 0,
\endaligned
\end{equation}
with initial condition $u(x,0)=u^{in}$, and
subject to either periodic boundary condition or the no-slip
Dirichlet boundary condition $u|_{\partial\Omega} = 0$.  In the
presence of physical boundaries the above regularization
(\ref{new-mod}) of the Navier-Stokes equations, is different in
nature from any of the other alpha regularization models,
because it does not require any additional boundary conditions.

This newest addition to the family of `$\alpha$' models was later on discovered to have already existed in the literature known as the Navier-Stokes Voight equations but as a model for viscoelastic fluids~\cite{Osk73,Osk80}. As mentioned earlier, the $\alpha$ in that setting represents the elasticity of the fluid.

\subsection{The Alpha Model as Sub-Grid-Scale Turbulence Model}

In addition to the remarkable match of explicit analytical steady state solutions of the alpha models to the experimental data in channels and pipes, the validity of the first alpha model, namely the NS-$\alpha$ model, as a sub-grid scale turbulence model was also tested numerically in~\cite{CHMZ99,MM03}.
In the numerical simulation of the 3D NS-$\alpha$ model, the authors of~\cite{CHMZ99,GH99,GH03,MM03} showed that the large scale (to be more specific, those scales of motion bigger than the length scale $\alpha$) features of a turbulent flow is captured.
For scales of motion smaller than the length scale $\alpha$, the energy spectra decays faster in comparison to that of NSE.
This numerical observation has been justified analytically in~\cite{FHT01}.
In direct numerical simulation, the fast decay of the energy spectra for scales of motion smaller than the supplied filter length represents reduced grid requirements in simulating a flow.
The numerical study of~\cite{CHMZ99} gives the same results, which also hold for the Leray-$\alpha$ model in~\cite{CHOT05,GH99}.

In~\cite{LKTT07} it was observed that the scaling exponent of the energy spectrum of the 2D Navier-Stokes-$\alpha$ model (NS-$\alpha$), for wave numbers $k$ such that $k\alpha\gg 1$, is $k^{-7}$.
{\it A posteriori}, it was observed that this scaling corresponding to that predicted by assuming that the dynamics for $k\alpha\gg 1$ was governed by the characteristic time scale for flux of the conserved enstrophy.
This observation led researchers~\cite{LKTT07} to speculate that (in general) the unknown scaling exponent for any $\alpha$-model may be predicted by the dynamical time scales for the dominant conserved quantity for that model in the regime $k\alpha\gg 1$.
This speculation is verified to be correct in~\cite{LKT08}, where numerical simulations of the 2D Leray-$\alpha$ model were presented.

\subsection{Navier-Stokes-Voight for Image Inpainting}

The main difference between NSV and other alpha models is that it needs no additional boundary conditions in the presence of physical boundaries. This makes NSV a natural alternative to NSE for inpainting.
In order to solve the inpainting problem we need to compute the solution to (\ref{sss}). We approximate it with the steady state solutions of (\ref{vtei}) with viscosity $\nu>0$.  As noted in~\cite{BBS01} the presence of viscosity is needed to have a unique steady-state solution and its stability depends on how big or small is the viscosity.  It is well-known that for very high Reynolds number flows (ie very small viscosity $\nu$), direct numerical simulation of the NSE, requires computational resources which cannot be accessed easily.  For purposes of direct numerical simulations, the NSV equations were proposed as a sub-grid scale turbulence model. Here we would like to investigate the effects of this regularized equations to image inpainting.  Notice that the steady state solution to the NSV equations in (\ref{nsv}) matches the steady state solution to the NSE.  We expect that by using the NSV, the convergence to the steady state solution will be at a much faster rate reducing the computational cost.

\section{Numerical Comparison NSE and NSV for Image Inpainting}\label{results-section}

We simulate the inviscid 2D NSV  with anisotropic diffusion (artificial viscosity)
\begin{equation}\label{vtei-alpha}
-\aa^2\frac{\partial}{\partial t}\Delta\omega+\frac{\partial \omega}{\partial t} + u \cdot \nabla \omega =
\nu\nabla\cdot (g(|\nabla \omega|)\nabla \omega),
\end{equation}
where, $\omega = \nabla \times u$. Notice that when $\alpha =0$, we recover (\ref{vtei}).  Using a forward time up-wind finite-difference scheme \cite{ATXX}, we solve for $\omega_{i,j}^{(n+1)}$ in the discretized equation:
\begin{equation}
\begin{split}
& (1-\alpha^2 \Delta) \omega_{i,j}^{n+1}
- \nu dt\;(\partial_x(g\omega_x^{n+1})+\partial_y(g\omega_y^{n+1}))
= (1-\alpha^2 \Delta) \omega_{i,j}^n \\
& + dt\left[ -|u_1^n| \mbox{ sgn}(u_1^n)
\dfrac{\omega_{i+ \mbox{ sgn}(u_1^n),j}^n-\omega_{i,j}^n}{dx} 
- |u_2^n| \mbox{ sgn}(u_2^n) \dfrac{\omega_{i,j+\mbox{ sgn}(u_2^n)}^n-\omega_{i,j}^n}{dy} \right].
\label{explicit}
\end{split}
\end{equation}
%

Our numerical experiments can be summarized into three categories: test for stability, test for efficiency and test for quality. In section \ref{stability}, we present some test cases showing that for a given time stepping size, the gray-level blows up when the inpainting is done using NSE while the inpainting using NSV produced a stable solution.  We give theoretical arguments in the appendix which in some sense support these numerical results.  It is worthwhile to mention that using an explicit numerical scheme for both the 2D NSE and 2D NSV equations, one can see a nice advantage of using NSV instead of the NSE in automating the inpainting problem in terms of CFL condition when we for simplicity ignore the nonlinear term in the governing equations.  In particular, the usual stiffness of the NSE by discretizing the linear part
explicitly is no longer present for NSV.  In the Appendix, we present rigorous argument for the stability condition when the nonlinear term is taken into consideration.
In section \ref{quality-efficiency} we present some numerical studies using NSE and NSV for different values of $\alpha$.  We look at the effect of the parameter $\alpha$ on the quality of the resulting images and estimate the number of floating point operations (FLOPS) needed to converge to steady state.  We perform this numerical experiment for two different time-steps.  The quality of the resulting image is measured using PSNR which will be defined in the next section.

\subsection{Stability Behavior of NSE and NSV for Inpainting}\label{stability}

\begin{table}[ht]
  \caption{\small\em Comparison between NSE($\alpha = 0$) and NSV.   $\nu$ = viscosity coefficient, $dt$- size of time step, $\Omega$ (in pixels) = size of the inpainting region.}

\begin{tabular}{c|lclc|l}\hline
   Figure  & $\alpha$ & $\nu$& $dt$ & $\Omega$ & Recovered image \\
\hline\hline
3a &   &      & & $10\times 10$ & Original image\\
3b &0 & 2 & 0.1   &               & Does not converge\\
3c & 0.5&  & 0.1      &               & Converges\\
3d & 0&  & 0.001      &               & Converges\\
\hline
4a &   &   & & $64\times 12$ & Original image\\
4b &0 &2 & 0.1   &               & Does not converge\\
4c &0.9 &  & 0.1   &               & Converges\\
4d &0 &  & 0.01   &               & Converges\\
\hline
\end{tabular}
\label{4test}
\end{table}
In this section we will present two test cases comparing the NSE and NSV both with anisotropic diffusion.  The results presented here suggests that for some fixed specified time-stepping size $dt$,  the NSV produced a stable solution in the image inpainting iteration process while the NSE with anisotropic diffusion provided an unstable solution (gray-level blows up).  A quick summary of results is provided in Table \ref{4test}.

For each test cases, we fix the viscosity $\nu$, tolerance tol, and the inpainting region $\Omega$.  In Figure~\ref{test1}a, the inpainting region was computed in a grid (in pixels) of $10 \times 10$.  For $dt=0.1$ and $\nu=2$, the NSE did not converge to the steady state solution due to large time-stepping, as shown in Figure~\ref{test1}b.  On the other hand, for the same $dt =0.1$ and viscosity $\nu=2$,  the steady state solution to NSV with $\alpha=0.5$ gave a reasonable image inpainting as shown in Figure~\ref{test1}c.  In Figure~\ref{test1}d, we show the converged steady state solution of NSE for a smaller $dt=0.001$. Using a subjective measure, Figures~\ref{test1}c and \ref{test1}d are comparable in quality but Figure~\ref{test1}d required a larger time-step to converge to the steady state solution.

\begin{figure}[ht]
\begin{center}
\subfigure[]{
\includegraphics[scale=0.40]{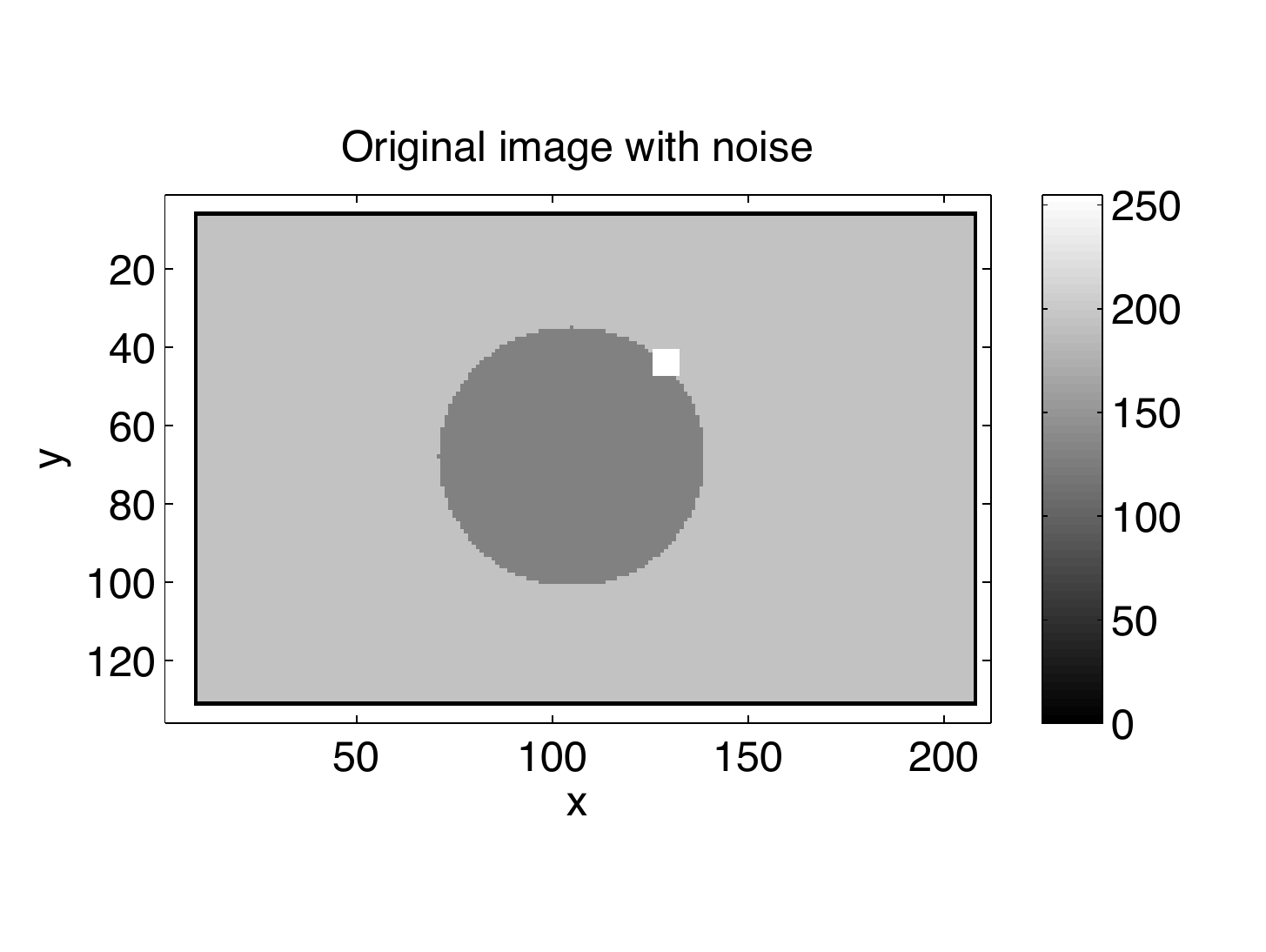}
}
\subfigure[]{
\includegraphics[scale=0.40]{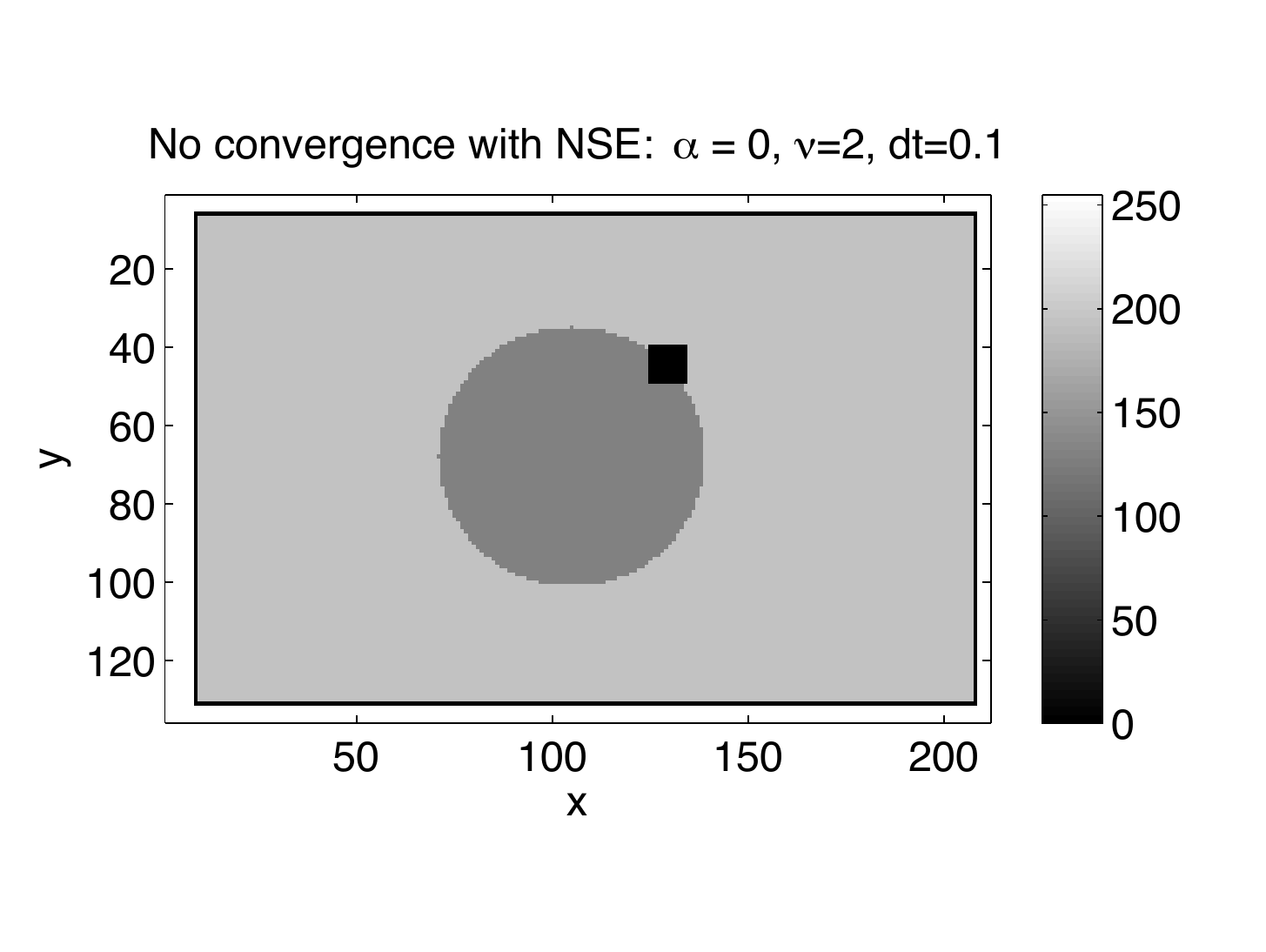}
}
\subfigure[]{
\includegraphics[scale=0.40]{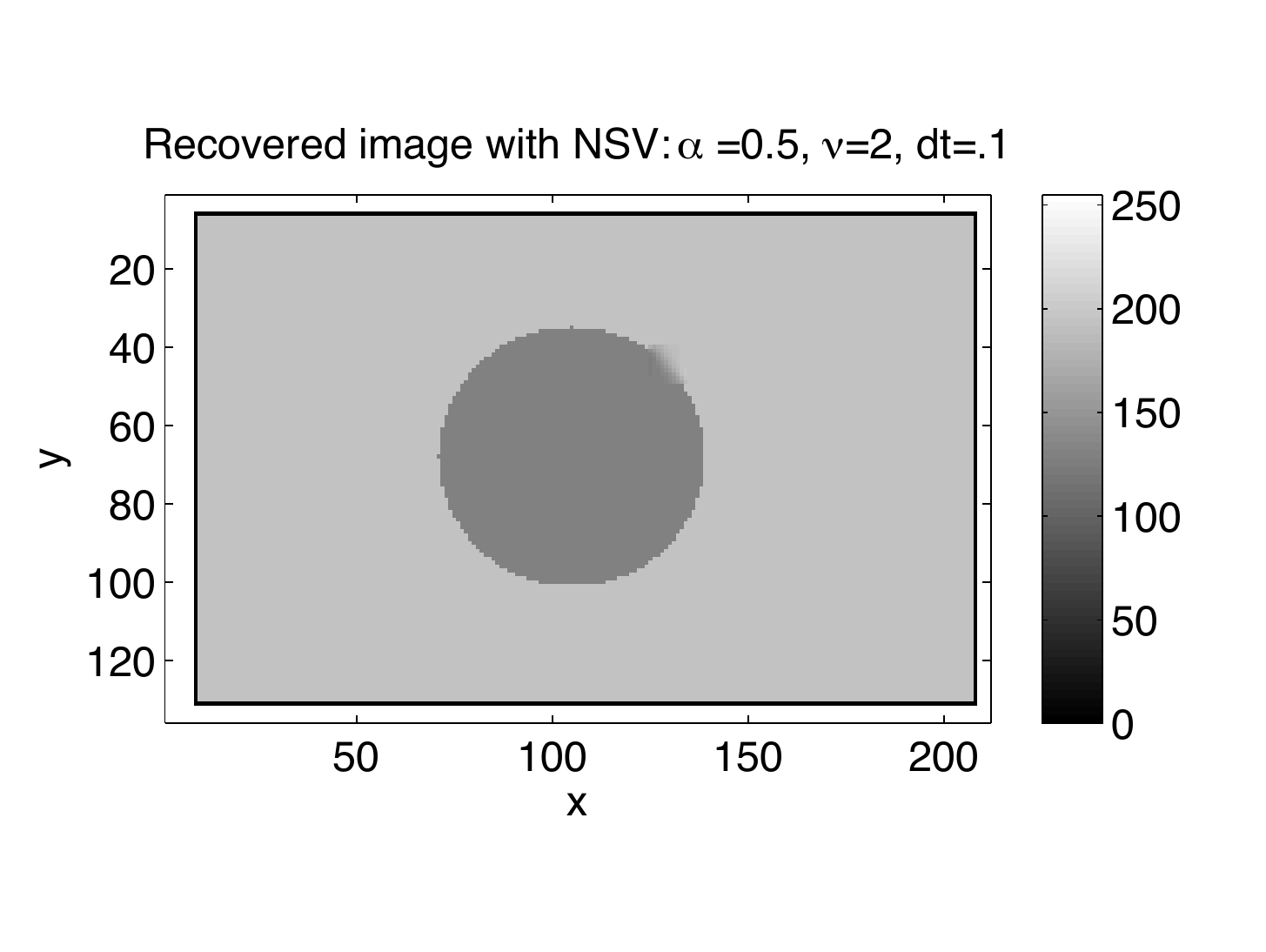}
}
\subfigure[]{
\includegraphics[scale=0.40]{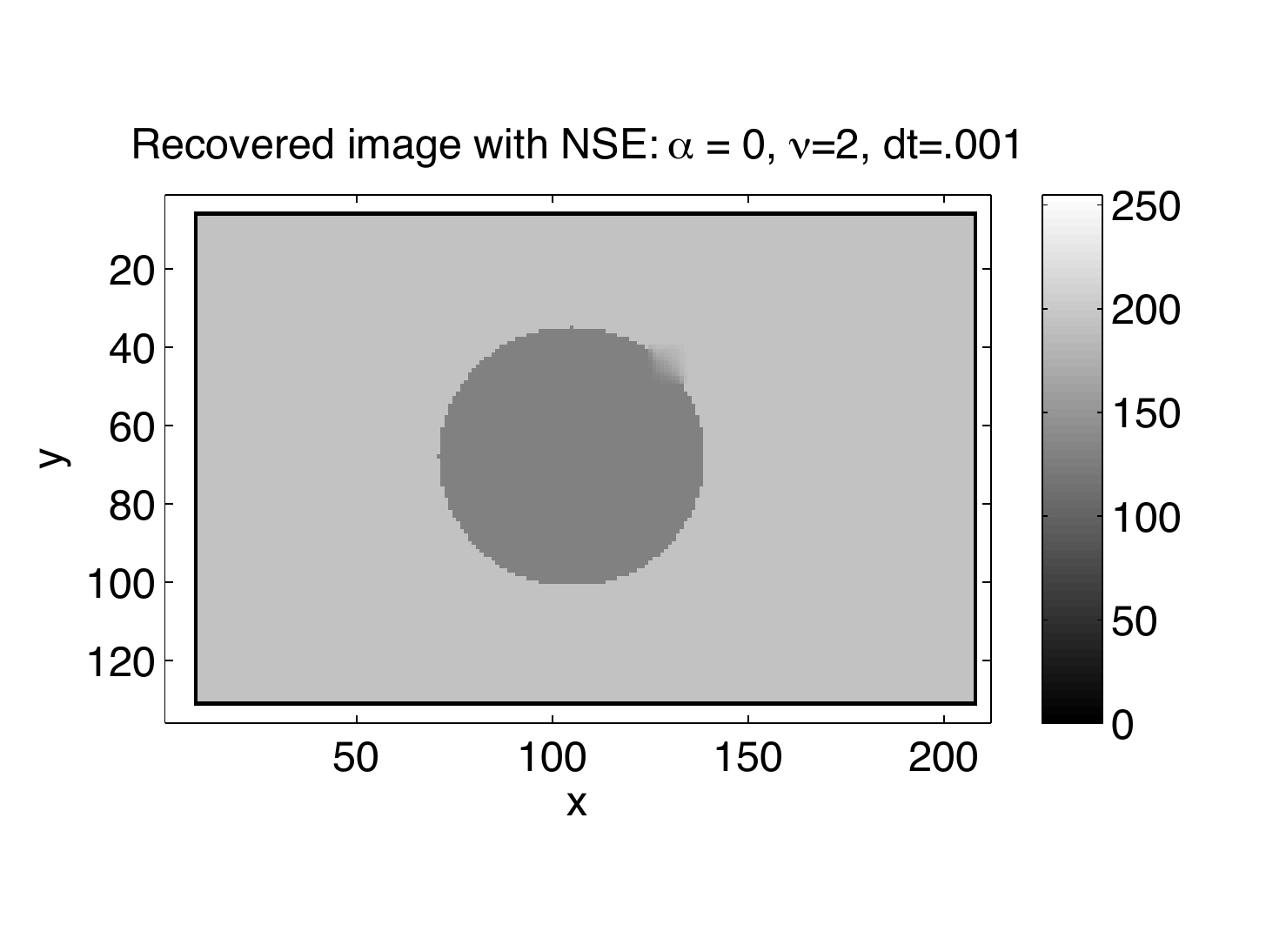}
}
\caption{\small\em A comparison between the NSE and NSV.  Figure a:  Original image $I$ with the inpainting region.  Figure~\ref{test1}b: For step size $dt=0.1$ the NSE with anisotropic diffusion does not converge to its steady state solution. Figure~\ref{test1}c:  On the other hand the NSV with anisotropic diffusion does converge for $\alpha=0.5$ and $dt=0.1$.  Figure d:  For a smaller step size $dt=.001$, the NSE with anisotropic diffusion does converge.}
\label{test1}
\end{center}
\end{figure}[ht]
In Figure~\ref{test2}a, we choose a slightly different size of the inpainting region computed in a grid (in pixels) of $64 \times 12$.  For $dt=0.1$ and $\nu=2$, the NSE did not converge to the steady state solution due to large time-stepping, as shown in Figure~\ref{test2}b.  Similar to the first example, for the same $dt =0.1$ and viscosity $\nu=2$,  the steady state solution to the NSV model, with $\alpha=0.9$, gave a reasonable result to the image inpainting problem as shown in Figure~\ref{test2}c.  In Figure~\ref{test2}d, we show the converged steady state solution of NSE for a smaller $dt=0.01$. Figures~\ref{test2}c and \ref{test2}d are comparable in quality but Figure~\ref{test2}d required a larger time-step to converge to the steady state solution.
\begin{figure}[ht]
\begin{center}
\subfigure[]{
\includegraphics[scale=0.40]{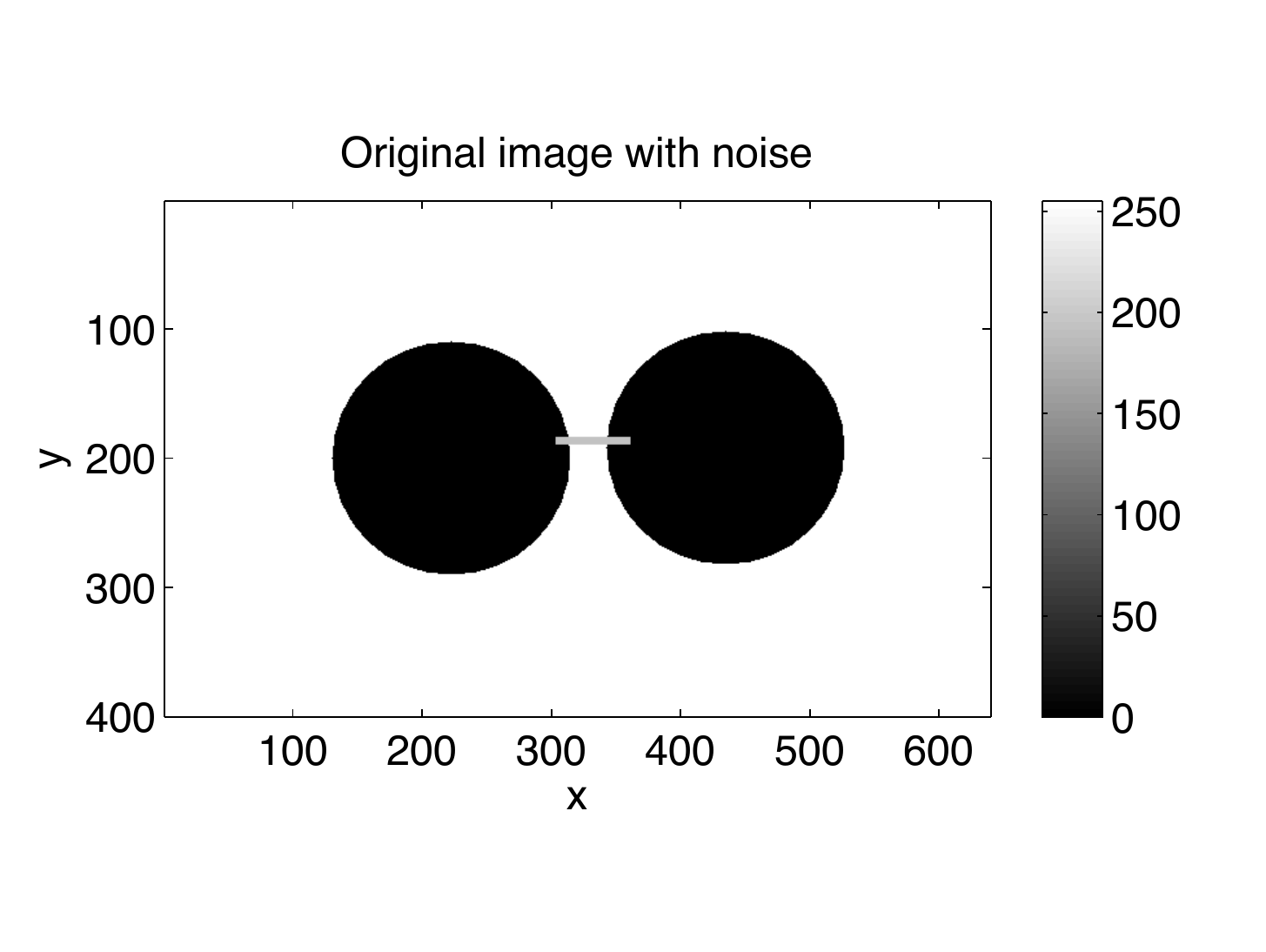}
}
\subfigure[]{
\includegraphics[scale=0.40]{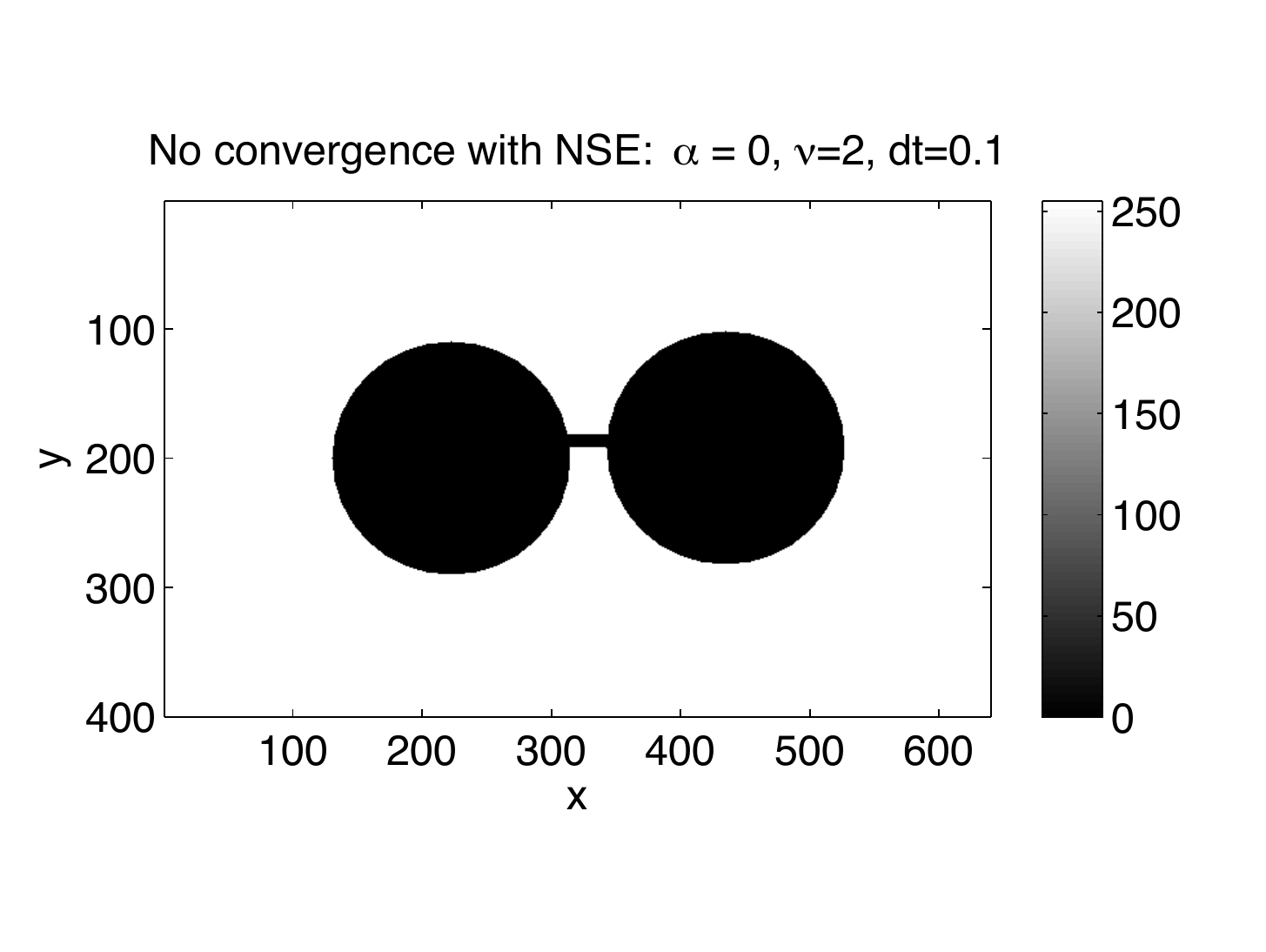}
}
\subfigure[]{
\includegraphics[scale=0.40]{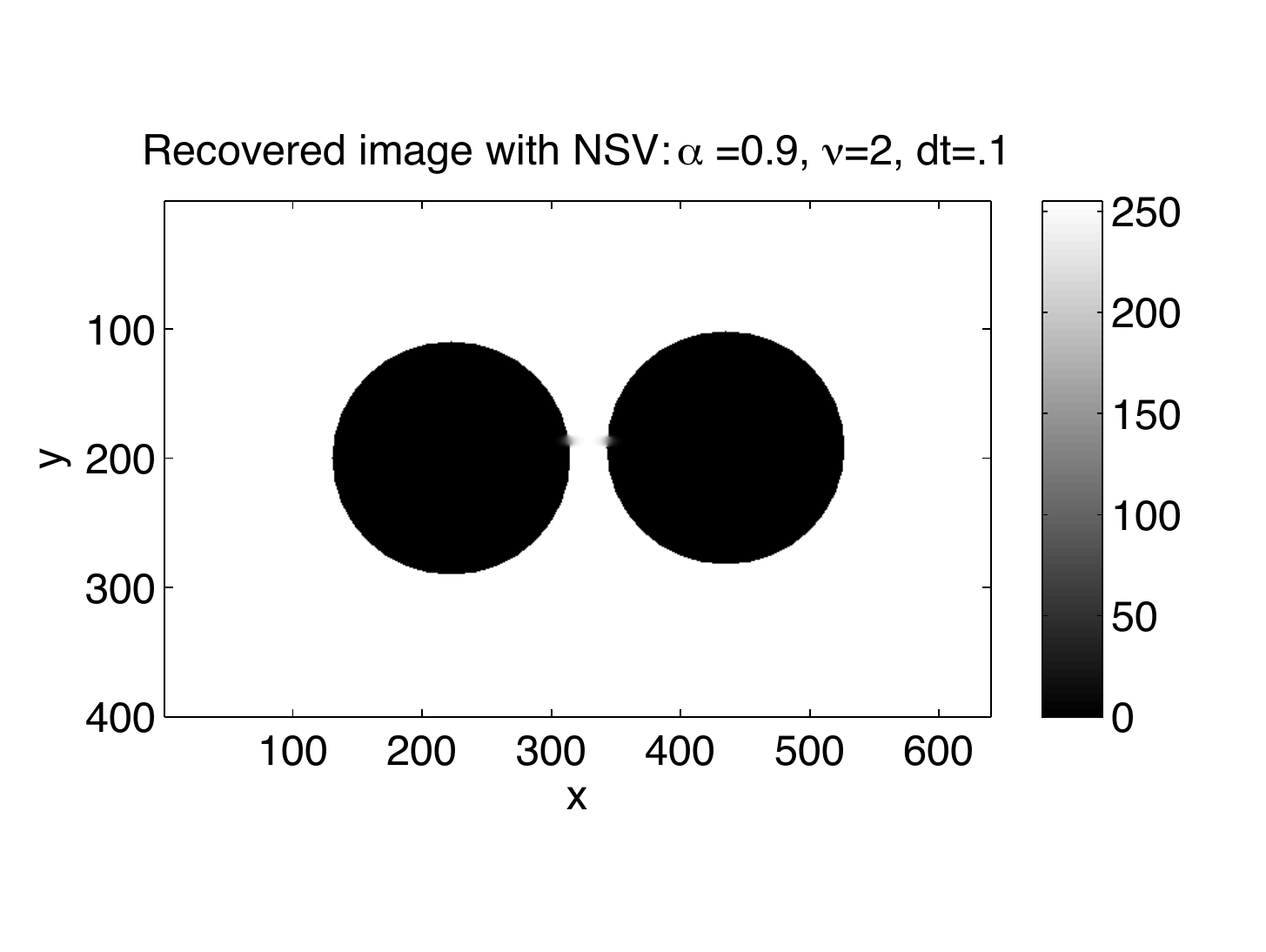}
}
\subfigure[]{
\includegraphics[scale=0.40]{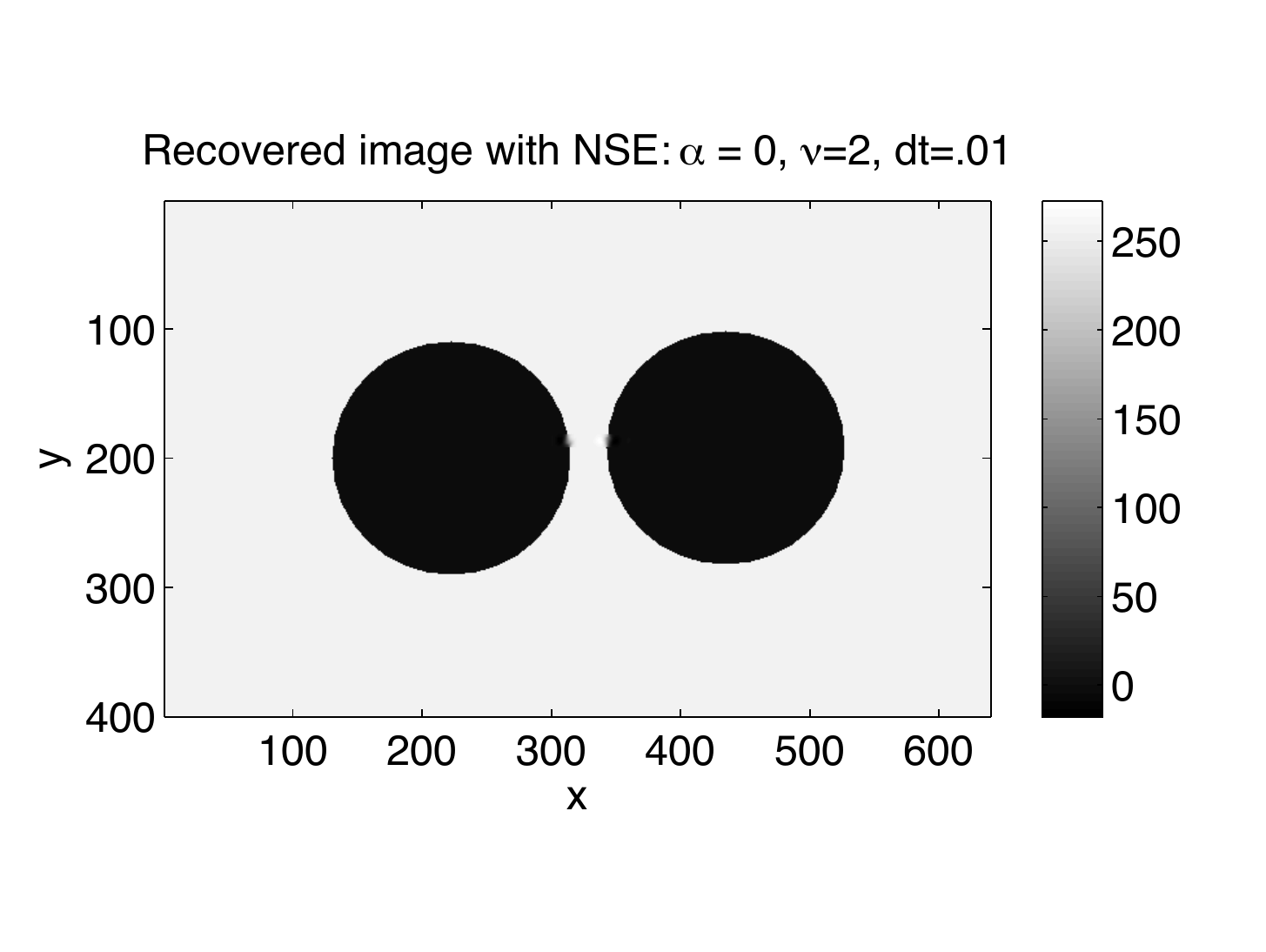}
}
\caption{\small\em A comparison between the NSE and NSV.  Figure a:  Original image $I$ with the inpainting region.  Figure\ref{test2}b:~For step size $dt=0.1$ the NSE with anisotropic diffusion does not converge to its steady state solution. Figure~\ref{test2}c:  On the other hand the NSV with anisotropic diffusion does converge for $\alpha=0.9$ and $dt=0.1$.  Figure~\ref{test2}d:  For a smaller step size $dt=.001$, the NSE with anisotropic diffusion does converge.}
\label{test2}
\end{center}
\end{figure}

\subsection{Comparing the Efficiency of NSE and NSV for Inpainting}\label{quality-efficiency}

The NSV converges to its steady state solution using a larger time-step or 
fewer number of iterations. 
However, it is not clear whether it is more efficient than the NSE because 
of the extra computational cost, for the $\alpha$ term, for each iteration.
In addition, we also need to compare the quality of the resulting images.
We do a FLOP counting for the two model equations until it reached a steady state solution by using the Lightspeed MATLAB toolbox.  We approximated the FLOP by taking only in consideration the cost of matrix inversions.  We use the {\it FLOP\_inv} function which returns the number of FLOP necessary to invert an $n\times n$ matrix. Note that this function returns the minimal number of FLOP for matrix inversion, as though the best possible algorithm was used, no matter which method we are actually using.  This should not be an issue for we use the same matrix inversion technique when running the two model equations.   Given that $\Omega$ is $N\times M$, we take $n=\max(N,M)$.  We then normalize the FLOP count by dividing it by the number of pixels in $\Omega$.  That is, the FLOP count is in units of number of FLOP per pixel.  In addition, we also compute the PSNR of the resulting images to give some measure of comparison for the quality of the resulting images in addition to subjective measure.  Let $P(i,j)$ be the original image that contains $N$ by $M$ pixels and $I(i,j)$ the reconstructed image.  The pixel values range between black (0) and white (255).  The PSNR in decibels (dB) are computed as follows:
\begin{equation*}
PSNR = 20\log_{10}\left(\dfrac{255}{RMSE}\right),
~\textrm{where}~
RMSE= \sqrt{\dfrac{\sum_i\sum_j\left(P(i,j)-I(i,j)\right)^2}{N * M}}.
\end{equation*} 
\begin{table}[ht]
  \caption{\small\em Comparison between NSE($\alpha = 0$) and NSV.   $\nu$ = viscosity coefficient, $dt$- size of time step, FLOP count = FLOP count/(N*M), PSNR = Peak signal-to-noise ratio.}
\begin{tabular}{c|lclrl}\hline
   FLOPS & $\alpha$ & $dt=0.001$ & $dt=0.0001$ &  \\
\hline\hline
 & 0     &  1.2375e4     & 9.0000e3   \\
 & $1/3$ &  6.3753e3     & 5.6253e3  \\
 & $2/3$ &  4.1252e3     & 3.7502e3  \\
 & $1$   &  3.7502e3     & 3.3751e3  \\
 & $4/3$ &  3.3751e3     & 3.7501e3  \\
\hline
\end{tabular}
\label{table2}
\end{table}
Comparison between two PSNR values for different reconstructed image gives one measure of quality.
Reconstructed images with higher PSNR are considered better.
(However, as frequently mentioned in the literature, PSNR may not equate with human perception; we will view PSNR as a reasonable metric here.)
In any case, we now describe of our results based on observations in Table~\ref{table2} and Figure ~\ref{psnr}.  In this numerical experiment, we fix the value of $\nu=2$.
\begin{figure}[ht]
\begin{center}
\subfigure[]{
\includegraphics[scale=0.42]{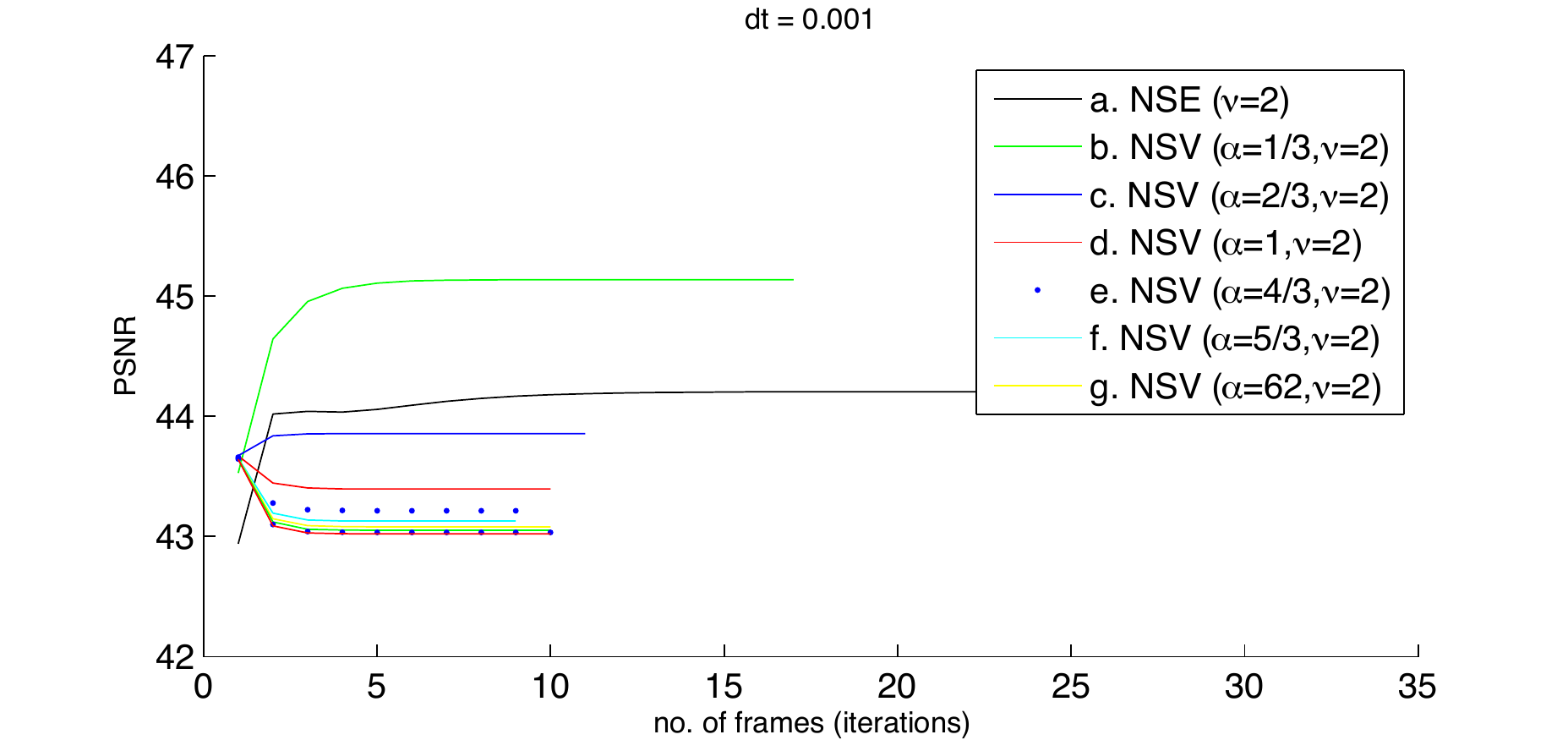}
}
\subfigure[]{
\includegraphics[scale=0.42]{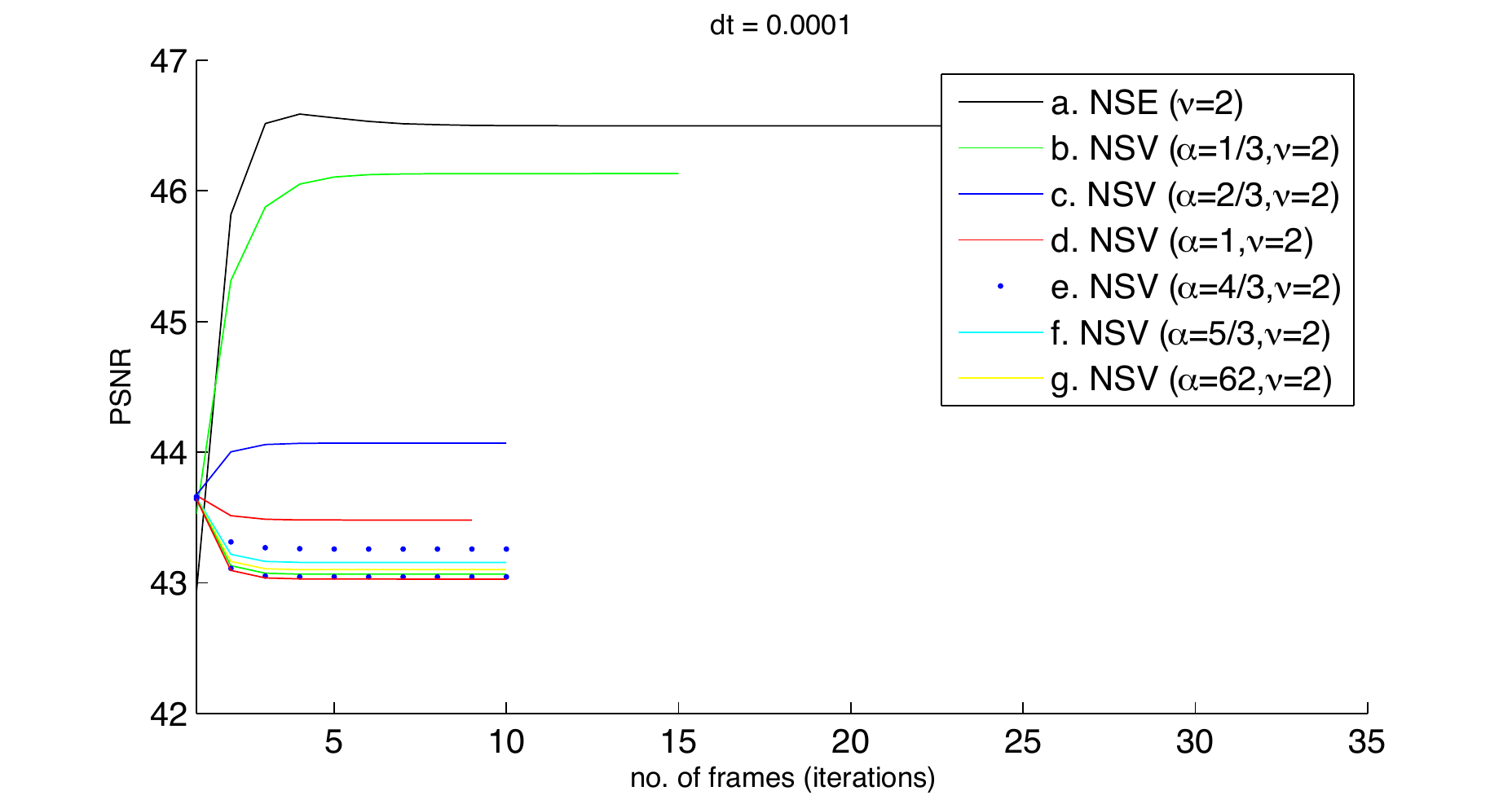}
}
\caption{\small\em PSNR of NSE vs NSV (for various values of $\alpha$).  When $dt=0.001$, the PSNR of NSV ($\alpha$=1/3) is higher than the NSE and requires only about half of the number of FLOPS needed for the NSE to converge to its steady state solution.}
\label{psnr}
\end{center}
\end{figure}
\begin{enumerate}
\item Comparing NSE $(\alpha=0)$ and NSV with $\alpha=1/3$ for both time steps $dt=0.001$ and $dt=0.0001$ in Table~\ref{psnr},  we notice the number of FLOPS needed by NSV to converge to steady state has been reduced to almost half of the FLOPS required by the NSE to converge to its steady state solution.
\item  In Figure~\ref{psnr}a, for smaller $dt =0.001$, the PSNR of NSV ($\alpha=1/2$) is 45 dB and is 1  point higher than that of NSE.  In the case when $dt=0.0001$, (Figure~\ref{psnr}b) the PSNR of NSE is 46.5 dB while the NSV for $\alpha=1/3$ has a PSNR value of 46.  
\item We also notice that increasing the time step from $dt=0.0001$ to $dt=0.001$ for the NSE changes the quality of the picture dramatically under subjective measure.  In Figure~\ref{psnr}, the PSNR for NSE is reduced from 46.5 dB to 44 dB when $dt$ is increased from $0.0001$ to $0.001$.  In the case of NSV for $\alpha=1/3$, the change in quality of the picture under subjective measure is unnoticeable.  In particular, the PSNR is changed from 46 dB to 45 dB as the $dt$ is increased. 
\end{enumerate}
In summary, one can show that for some value of $\alpha>0$, NSV gives an inpainting which is comparable to NSE but requiring less resources.
It is then natural to ask if one can determine the value of $\alpha$ {\em a priori} which will yield the smallest FLOP count with maximum PSNR.
Our preliminary studies suggest that choice of $\alpha$ may be image dependent.
For this reason, one may prefer the NSE to avoid finding the optimal $\alpha$.
This suggests the development of schemes to adaptively determine optimized $\alpha$.



\begin{figure}[ht]
\begin{center}
\subfigure[]{
\includegraphics[scale=0.48]{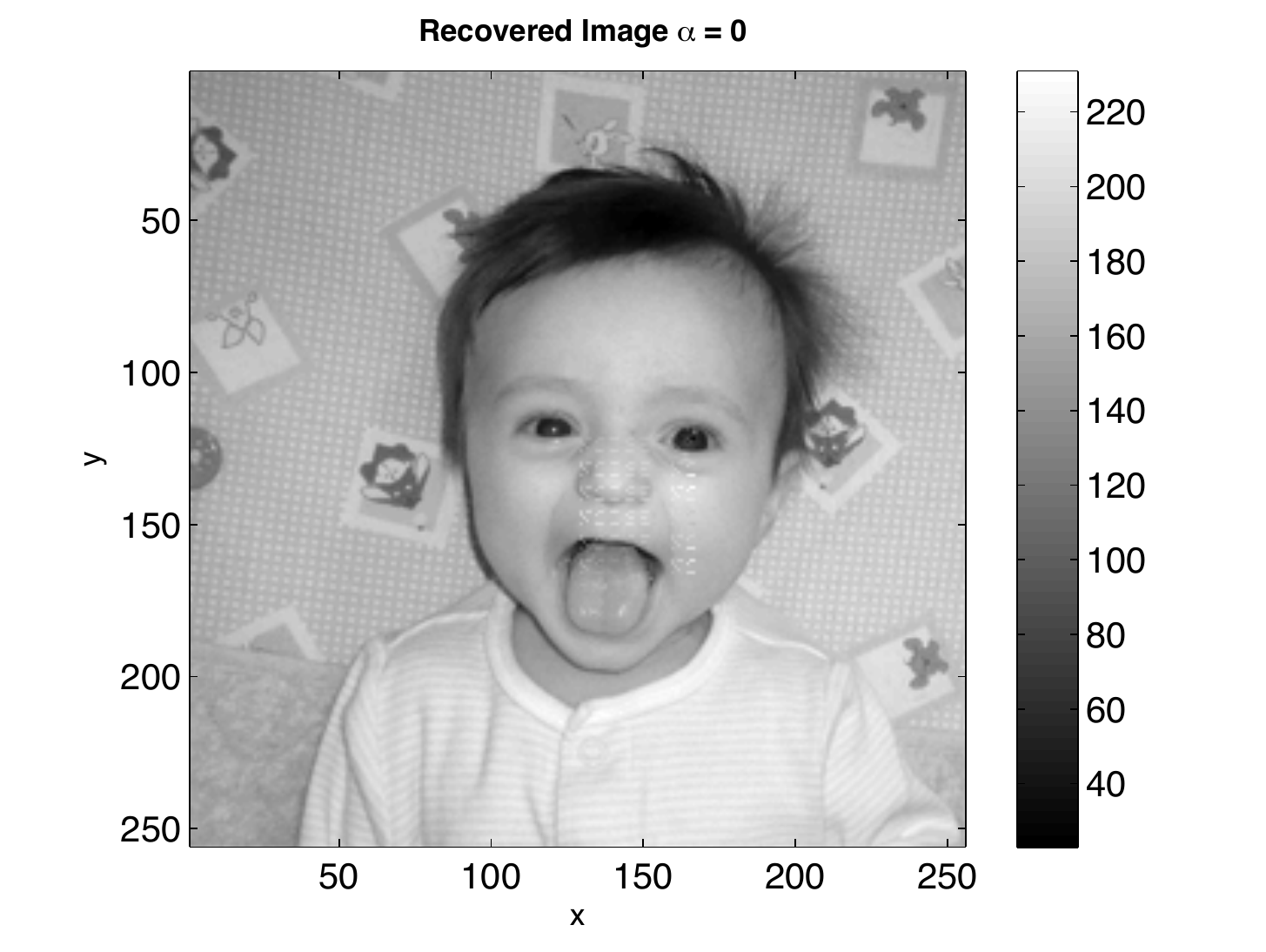}
}
\subfigure[]{
\includegraphics[scale=0.48]{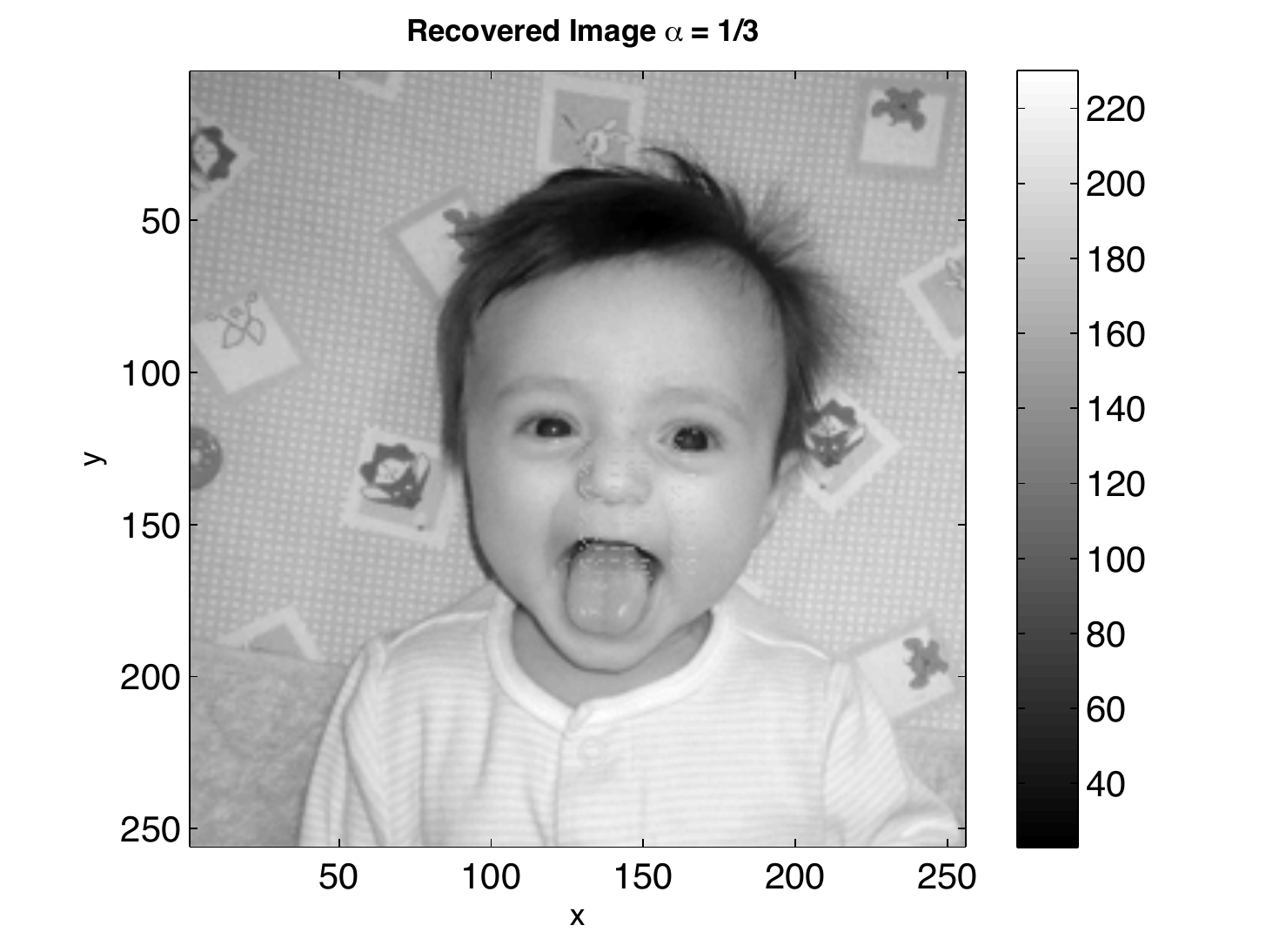}
}
\caption{\small\em An example of image recovered for (a) NSE  and (b) NSV ($\alpha=1/3$) with $dt=0.0001$.}
\label{101_105}
\end{center}
\end{figure}

\section{Some Supporting Theoretical Results}
\label{sec:theory}

In this section we summarize some existing results, and
then establish some new results that support the numerical results
in the previous section.
To this end, let $\Omega = [0,2\pi L]^2$.
The NSE of viscous incompressible flows, subject to periodic boundary condition on domain $\Omega$, is written in the form:
\begin{equation}
\aligned
\pp_t u - \nu\Dd u + (u\cdot\nabla)
 u &= -\nabla p + f,\\
 \nabla \cdot u &= 0,
\endaligned
\end{equation}
with initial condition $u(x,0) = u^{in}(x)$,
where $u$ represents the unknown fluid velocity,
$p$ is the unknown pressure scalar, $\nu >
0$ is the constant kinematic viscosity,
The given forcing function $f$ is assumed here to be
time independent and with mean zero:
$\int_\Omega f(x) dx = 0$.
The initial velocity $u^{in}$ is 
also assumed to have zero mean, hence also $u$.
Below we use will use standard notation in the context of the mathematical theory of Navier-Stokes equations (NSE) (see, e.g.,~\cite{CF88,Temam88,Temam01}).
In particular,
\begin{enumerate}
\item We denote by $L^p$ and $H^m$ the usual Lebesgue and Sobolev spaces,
respectively.
And we denote by $|\cdot|$ and $(\cdot,\cdot)$ the $L^2-$norm
and $L^2-$inner product, respectively.
\item Let $\mathcal{F}$ be the set of all vector trigonometric polynomials
with periodic domain $\Omega$.
We then set
$$
\mathcal{V}=\left\{\phi \in
\mathcal{F}:\nabla\cdot\phi = 0 \ \mbox{and}
\int_\Omega \phi(x)\ dx = 0\right\}.
$$
We set $H$ and $V$ to be the closures of $\mathcal{V}$ in $L^2$ and $H^1$,
respectively.
We note by Rellich lemma (see, e.g.,~\cite{AR75}) we have the $V$ is compactly embedded in $H$.
\item We denote by $P_\ss:L^2 \rightarrow H$ the Helmholtz-Leray orthogonal
projection operator, and by $A = -P_\ss\Dd$ the Stokes operator subject to
periodic boundary condition with domain $D(A) = (H^2(\Omega))^2\cap V$.
We note that in the space-periodic case,
\begin{equation*}
Au = -P_\ss\Dd u = -\Dd u, \hspace{.5cm} \mbox{for all }u \in D(A),
\end{equation*}
with $A^{-1}$ self-adjoint positive definite and compact from $H$ into $H$ (cf.~\cite{CF88,Temam01}).
Denote $0 < {L}^{-2}= \lambda_1 \leq \lambda_2 \leq \dots \dots$ the eigenvalues of $A$, repeated according to their multiplicities.  
\item We recall the following two-dimensional Ladyzhenskaya inequality:
\begin{equation}\label{Lady}
\aligned
&\|\phi\|_{L^4}\leq c\|\phi\|_{L^2}^{1/2}\|\phi\|_{H^1}^{1/2},
\hspace{.5cm}\mbox{for every }
 \phi \in H^1(\Omega).
\endaligned
\end{equation}
Hereafter $c$ will denote a generic dimensionless constant.

\item For $w_1, w_2 \in \mathcal{V}$, we define the bilinear form
\begin{equation}\label{B1}
B(w_1,w_2) = P_\ss((w_1\cdot\nabla)w_2).
\end{equation}
\item By the Sobolev inequalities and compactness theorems, in two dimensions (or any dimensions less than 4)  we can define on $V$ a trilinear continuous form $b$ by setting
\begin{equation}
b(u,v,w) = (B(u,v), w).  
\end{equation}
If $u, v \in V$ then
\begin{equation}\label{buuu}
b(u,v,v) =0. 
\end{equation}
A special case we will need to establish for stability of 2D NSV is:
\begin{equation}\label{buuAu}
b(u,u,Au) =0, \mbox{for all }u\in D(A).
\end{equation}

\end{enumerate}

We finish by mentioning some useful inequalities for semi-discretizations.
Let $n$ be the spatial dimension.
The following are easily established (cf.~\cite{Temam88,Temam01}):
\begin{eqnarray}\label{reverse_poincare_pf}
\aligned
\|u_h\|_h^2 &= \sum_{i,j=1}^n 
\left\{
\frac{1}{h_j^2}\int_\Omega \lvert u_{ih}\left(x+\frac{\vec{h}_j}{2}\right)-u_{ih}\left(x-\frac{\vec{h}_j}{2}\right)\lvert^2 \;dx
\right\}
\\
&\leq 4 \left(\sum_{j=1}^n\frac{1}{h_j^2}\right)|u_h|^2 = S(h)^2|u_h|^2
\endaligned
\end{eqnarray}

\begin{eqnarray}\label{reverse_poincare_pf2}
\aligned
|\Delta_h u_h|^2 &= \dfrac{1}{4}\sum_{i,j=1}^n 
\left\{
\frac{1}{h_j^2}\int_\Omega \lvert u_{ih}\left(x+\vec{h}_j\right)+u_{ih}\left(x-\vec{h}_j\right) -2u_{ih}(x)\lvert^2\;dx
\right\} \\
& \leq 4 \left(\sum_{j=1}^n\frac{1}{h_j^2}\right)\|u_h\|^2 = S(h)^2\|u_h\|^2
\endaligned
\end{eqnarray}


\subsection{Uniqueness of Steady State Solutions for Boundary Driven Flows}\label{subsec:uniqueness}

The results in~\cite{BBS01} is obtained by requiring the user to tune the parameters and related data in the inpainting region to suit the particular problem at hand.
In~\cite{ATXX} while inpainting  various sample images, with similar inpainting region, viscosity $\nu$ and $\delta t$, it was found that certain set of parameters produced stable solutions in some images and unstable solutions (gray-level blows up) in others.
They have noted that certain characteristics of $I$ near $\partial \Omega$ has some effect on the maximum allowable stable choice of $\delta t$.
In this section we will present analytical arguments on the dependence of the solution on the image at the boundary, size of the inpainting region and viscosity.
We give some hypothesis on how the related data affects the convergence of the numerical solution.
Note that the steady state solution for the NSV is exactly equal to the steady state solution of NSE.
The main goal for this study is to determine the relationship between the viscosity, the image at the boundary and the size of the inpainting region.
For example, we would like to determine for which size of the inpainting region and norm of $I$ on the boundary, do we get a unique steady state solution. 

In~\cite{BBS01}, for the Navier-Stokes based inpainting, a discussion on the uniqueness of steady state solution and its relevance to inpainting was presented.
It was expected that Navier-Stokes based inpainting may inherit some of the stability and uniqueness issues known for incompressible fluids, although the effect of anisotropic diffusion is still unclear.
The dependence of uniqueness in the viscosity of the fluid is discussed in~\cite{BBS01}.
In this section, we present some rigorous arguments following the work in~\cite{Temam88,Temam01}), modified slightly to interpret it in the context of image inpainting.
In this section we show the dependence of the uniqueness of steady solution on the viscosity, on the image at the boundary, and on the size of the inpainting region.         
We start here  by recalling some notation.
The notation used here are similar to those used in Section~\ref{NSV-section}.
Let us denote by $\Omega$ a bounded domain of $\mathbb{R}^2$ of class $\mathcal{C}^2$ which is filled with an incompressible viscous fluid.  

For the particular application of NSE and NSV in image inpainting, we supplement it with Dirichlet boundary condition $u=\phi$ on $\partial \Omega$, in which $\phi$ is independent of time.
We consider here the non-homogeneous steady state Navier-Stokes problem which coincides with the steady state Navier-Stokes-Voight problem.
Find $u$ and $p$ such that
\begin{eqnarray}\label{NSE-nonhomo}
-\nu\Delta u + (u\cdot\nabla) u + \nabla p =0, \quad \mbox{in }\Omega,\\
\mbox{div }u =0, \quad \mbox{in }\Omega,\\
u=\phi \quad \mbox{on } \partial\Omega.
\end{eqnarray}

We assume that $\phi$ is given as the trace on $\partial \Omega$ of a function $\Phi$,
\begin{equation}\label{bcond}
\Phi\in H^2(\Omega),\quad \mbox{div }\Phi =0, \quad \int_{\partial\Omega} \Phi\cdot \mbox{n}\; ds =0,
\end{equation}
where n is the unit outward normal in $\partial\Omega$.
The idea is the following:  Given the physical data $\phi$ defined on $\partial\Omega$, we find an extension $\Phi$ of $\phi$ inside $\Omega$ satisfying (\ref{bcond}).
Under the above hypothesis there exist at least one $u \in H^1$ and distribution $p$ on $\Omega$ satisfying (\ref{NSE-nonhomo}) (see~\cite{Temam88,Temam01}) provided we choose the extension $\Phi\in L^a(\Omega)$ of $\phi$ such that $\|\Phi\|_a$ is sufficiently small for $a>2$ so that 
\begin{equation}\label{smallness-cond}
|b(v,\Phi,v)|\leq \frac{\nu}{2}\|v\|^2,\quad \mbox{for all }   v\in V.
\end{equation}
  The construction of $\Phi$ satisfying (\ref{bcond}) and (\ref{smallness-cond}) is presented in~\cite{Temam01}.
Knowing $\Phi$ and knowing $u$ is a solution of (\ref{NSE-nonhomo}) then letting $\hat{u} = u-\Phi$, equation (\ref{NSE-nonhomo}) is equivalent to
\begin{eqnarray}\label{NSE-nonhomo2}
-\nu\Delta \hat{u} + (\hat{u}\cdot\nabla) \hat{u} +(\hat{u}\cdot\nabla)\Phi+ (\Phi\cdot\nabla)\hat{u}+ \nabla p =\nu\Delta\Phi - (\Phi\cdot\nabla) \Phi, \quad \mbox{in }\Omega,\\
\mbox{div }\hat{u} =0, \quad \mbox{in }\Omega,\\
\hat{u}=0 \quad \mbox{on } \partial\Omega.
\end{eqnarray}
We now state a uniqueness result.
\begin{theorem} Suppose that the norm of $\Phi$ in $L^a(\Omega)$ is sufficiently small so that
\begin{equation}
|b(v,\Phi,v)|\leq \frac{\nu}{2}\|v\|^2,\quad \mbox{for all }   v\in V,
\end{equation}
and $\nu$ is sufficiently large so that 
\begin{equation}
\nu^2 > 2C\lambda_1^{-1/2}\|f\|,
\end{equation}
where $f = \nu\Delta \Phi - B(\Phi,\Phi)$, $\lambda_1$ is the smallest eigenvalue of the Stokes operator, and $C$ is constant, then \pref{NSE-nonhomo} has a unique solution.
\end{theorem}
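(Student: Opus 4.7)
The plan is to follow the classical uniqueness argument for stationary Navier--Stokes with inhomogeneous boundary data as developed in Temam, transported into the $\hat u = u-\Phi$ formulation that the authors have already set up. The weak form of \pref{NSE-nonhomo2} reads: find $\hat u \in V$ such that for all $v \in V$,
\begin{equation*}
\nu(\nabla \hat u,\nabla v) + b(\hat u,\hat u,v) + b(\hat u,\Phi,v) + b(\Phi,\hat u,v) = (f,v),
\end{equation*}
with $f=\nu\Delta\Phi - B(\Phi,\Phi)$. The first step is to derive an a priori bound on any solution $\hat u$. Testing with $v=\hat u$ and using \pref{buuu} kills $b(\hat u,\hat u,\hat u)$ and $b(\Phi,\hat u,\hat u)$, so
\begin{equation*}
\nu \|\hat u\|^2 + b(\hat u,\Phi,\hat u) = (f,\hat u).
\end{equation*}
The smallness hypothesis \pref{smallness-cond} absorbs the middle term into $\tfrac{\nu}{2}\|\hat u\|^2$, and the Cauchy--Schwarz/Poincar\'e bound $|(f,\hat u)| \leq \lambda_1^{-1/2}\|f\|\,\|\hat u\|$ then yields $\|\hat u\| \leq 2\lambda_1^{-1/2}\|f\|/\nu$.

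Next I would address uniqueness. Suppose $\hat u_1,\hat u_2 \in V$ are two solutions and set $w=\hat u_1-\hat u_2\in V$. Subtracting the two weak formulations and writing $b(\hat u_1,\hat u_1,v)-b(\hat u_2,\hat u_2,v)=b(w,\hat u_1,v)+b(\hat u_2,w,v)$ gives
\begin{equation*}
\nu(\nabla w,\nabla v) + b(w,\hat u_1,v) + b(\hat u_2,w,v) + b(w,\Phi,v) + b(\Phi,w,v) = 0.
\end{equation*}
Testing with $v=w$ and again invoking \pref{buuu} eliminates the $b(\hat u_2,w,w)$ and $b(\Phi,w,w)$ terms, leaving
\begin{equation*}
\nu\|w\|^2 + b(w,\hat u_1,w) + b(w,\Phi,w) = 0.
\end{equation*}
The smallness hypothesis again controls $|b(w,\Phi,w)|\leq \tfrac{\nu}{2}\|w\|^2$.

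The crux is the term $b(w,\hat u_1,w)$. Here I would use the two-dimensional Ladyzhenskaya inequality \pref{Lady}: from $|b(w,\hat u_1,w)|\leq c\|w\|_{L^4}^2\|\hat u_1\|$ and $\|w\|_{L^4}^2 \leq c|w|\,\|w\|$, combined with the Poincar\'e estimate $|w|\leq \lambda_1^{-1/2}\|w\|$, I obtain
\begin{equation*}
|b(w,\hat u_1,w)| \leq c\,\lambda_1^{-1/2}\|\hat u_1\|\,\|w\|^2.
\end{equation*}
Substituting the a priori bound on $\|\hat u_1\|$ derived above then gives
\begin{equation*}
\tfrac{\nu}{2}\|w\|^2 \leq c\,\lambda_1^{-1/2}\cdot\frac{2\lambda_1^{-1/2}\|f\|}{\nu}\,\|w\|^2,
\end{equation*}
and the hypothesis $\nu^2 > 2C\lambda_1^{-1/2}\|f\|$ (with $C$ chosen to absorb the Ladyzhenskaya constant and the factor from the a priori bound) forces $\|w\|=0$, hence $\hat u_1=\hat u_2$ in $V$; the corresponding pressures then coincide up to an additive constant by the De Rham lemma.

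The main obstacle is the delicate book-keeping of constants: the Ladyzhenskaya constant $c$, the Poincar\'e constant $\lambda_1^{-1/2}$, and the factor $2/\nu$ coming from the a priori estimate must all be packaged into the single constant $C$ in the hypothesis so that the final smallness inequality is tight. Everything else is a routine application of \pref{buuu} and \pref{Lady}; the role of the smallness assumption on $\Phi$ is simply to make the inhomogeneous lift behave, in the energy estimate, as an admissible perturbation of the homogeneous Dirichlet problem.
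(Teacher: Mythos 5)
Your proof is correct and is precisely the classical Temam argument that the paper itself invokes (the paper's ``proof'' is simply a citation to Temam): lift by $\Phi$, obtain the a priori bound $\|\hat u\|\leq 2\nu^{-1}\|f\|_{*}$ by testing with $\hat u$ and absorbing $b(\hat u,\Phi,\hat u)$ via the smallness hypothesis, then test the difference equation with $w$ and close with Ladyzhenskaya and Poincar\'e. The only caveat is bookkeeping: your chain of estimates, taking $\|f\|$ as the $L^2$ norm, produces the condition $\nu^2 > 4c\lambda_1^{-1}\|f\|$ rather than the stated $\nu^2 > 2C\lambda_1^{-1/2}\|f\|$, which your computation reproduces exactly only if $\|f\|$ is read as the $V'$ (dual) norm --- a discrepancy in the paper's unstated norm convention, not in your logic.
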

\begin{proof}
See~\cite{Temam88,Temam01}.
\end{proof}

We can recast the theorem above, in the context of image inpainting, as  the dependence of the uniqueness of steady solution on the viscosity $\nu$, on the image at the boundary, which is related to $\Phi$ (since the stream function is related to the velocity field), and on the size of the inpainting region which is related to $\lambda^{-1/2}$.  

\subsection{Stability Analysis of the Scheme based on 2D NSE}\label{subsec:stab_anal_nse}

In this section we are concerned with a discussion on the discretization of the Navier Stokes equations in two dimensions, subject to periodic boundary conditions, with basic domain $\Omega = [0,2\pi L]^2$.
We study here a full discretization of the equations, both in space and time.
We start with a description of the approximating scheme and then proceed to the study of the stability of this scheme.
Our study here is based on the energy methods similar as in~\cite{Temam01} which leads to sufficient conditions for stability.


We now begin by defining a Galerkin approximation of the separable normed space $V$.
For reference we direct the reader to~\cite{Temam01}.
Let $V_h, \; h\in \N$, be an increasing sequence of finite-dimensional subspaces of $V$ whose union is dense in $V$.
For simplicity, assume that 
\begin{equation}
V_h\subset L^2(\Om), \quad\mbox{for all } h\in \N.
\end{equation}
The space $V_h$ is therefore equipped with two norms:  the norm $|\cdot|$ induced  by $L^2(\Om)$ and its own norm $\|\cdot\|_h$.
Since $V_h$ is finite dimensional then these two norms must be equivalent.
To be more precise we have with $d_0$ independent of $h$,
\begin{eqnarray}
|u_h| & \leq & d_0 \|u_h\|_h,\quad  \mbox{for all } u_h\in V_h,
\label{d0}
\\
\|u_h\|_h & \leq & S(h)|u_h|, \quad \mbox{for all } u_h\in V_h.
\label{reverse_poincare1}
\end{eqnarray}
Similarly, 
 let $D(A)_h, \; h\in \N$, be an increasing sequence of finite-dimensional subspaces of $D(A)$ whose union is dense in $D(A)$.
For simplicity, assume that 
\begin{equation}
D(A)_h\subset V_h\subset L^2(\Om), \quad\mbox{for all } h\in \N.
\end{equation}
The space $D(A)_h$ is therefore equipped with two norms:  the norm $\|\cdot\|_h$ induced  by $V_h$ and its own norm $|A_h\cdot|$.
Since $D(A)_h$ is finite dimensional normed space then these two norms must be equivalent.
To be more precise we have with $d_2$ independent of $h$,
\begin{eqnarray}
\|u_h\|_h & \leq & d_2|A_hu_h|,\quad  \mbox{for all } u_h\in D(A)_h,
\\
|A_hu_h| & \leq & c S(h)\|u_h\|_h, \quad \mbox{for all } u_h\in D(A)_h.
\label{reverse_poincare2}
\end{eqnarray}
The constant $S(h)$, which depends on $h$ is sometimes called the {\it stability constant} since it plays a major important role in obtaining necessary conditions on the stability of numerical approximations.
Usually $S(h)\rightarrow\infty$, as $h\rightarrow 0$.

Let there be given a trilinear continuous form on $V_h$, say $b_h(u_h,v_h,w_h)$ which satisfies the following:
\begin{enumerate}
\item For all  $u_h, v_h\in V_h,$ both of the following hold:
\begin{equation}
b_h(u_h,v_h,v_h) =0, 
\end{equation}
\begin{equation}\label{buuvh}
\aligned
|b_h(u_h,u_h,v_h)|
&
\leq d_1\|u_h\|_h^2\|v_h\|_h
\leq d_1S^2(h)|u_h|\|u_h\|_h|v_h|
\\
&
\leq S_1(h)|u_h|\|u_h\|_h|v_h|,
\endaligned
\end{equation}
where at least, 
\begin{equation}
S_1(h)\leq d_1 S^2(h).
\end{equation}
\item For all  $u_h, v_h, w_h \in V_h,$
\begin{equation}
|b_h(u_h,v_h,w_h)|\leq d_1\|u_h\|_h\|v_h\|_h\|w_h\|_h. 
\end{equation}
\end{enumerate}
We divide the interval $[0,T]$ into $N$ intervals of equal length 
$k=T/N$.
We associate with $k$ and the function $f$, the elements $f_1, \dots, f^N$:
\begin{equation}
f^m = \frac{1}{k}\int^{mk}_{(m-1)k}f(t)\;dt, \quad m=1,\dots,N;\quad f^m\in L^2(\Om).
\end{equation}
We denote by $u_h^0$ the orthogonal projection of the initial condition $u_0$ onto $D(A)_h$ in $L^2(\Om)$.  
When $u_h^0,\dots,u_h^{m-1}$, are known, $u_h^m$ is the solution in $D(A)_h$ of
\begin{equation}\label{scheme}
 \frac{1}{k}(\uh{m}-\uh{m-1},v_h) + \nu((\uh{m-1},v_h))_h + b_h(\uh{m-1}, \uh{m-1},v_h) = (f_m,v_h)
\end{equation}
for all $v_h\in D(A)_h$.
  
\begin{lemma}\label{lemma-nse}
We assume that $k$ and $h$ satisfy
\begin{enumerate}
\item $kS^2(h)\leq \dfrac{1-\delta}{4\nu}$ for some $\delta$, $0<\delta<1$ ,
\item $kS^2(h)\leq 1$,
\item $kS_1^2(h)S^2(h)\leq \dfrac{\nu\delta}{8d_0^2d_5}$,
\end{enumerate}
where $d_0$ is as in (\ref{d0}) and 
\begin{equation}
d_5=\|u_0\|^2 + d_0^2\left(\dfrac{d_0^2+1-\delta}{\nu d_0^2}\right)\int_0^T|f(s)|^2 \;ds,
\end{equation}
then, the $\uh{m}$ given by \pref{scheme} remain bounded in the following sense
\begin{equation}\label{bd1-nse}
\|\uh{m}\|_h^2\leq d_5 \quad m=1,\dots,N,
\end{equation}
\begin{equation}\label{bd2-nse}
k\sum_{m=1}^r|A_h\uh{m-1}|^2\leq\dfrac{2d_5}{\nu\delta},
\end{equation}
\begin{equation}\label{bd3-nse}
\sum_{m=1}^N\|\uh{m}-\uh{m-1}\|_h^2\leq 2\left(\dfrac{2-\delta}{\delta}\right)d_5 + 4 \int_0^T|f(s)|^2\;ds.
\end{equation}
\end{lemma}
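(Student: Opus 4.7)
\medskip

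\noindent\textbf{Proof proposal.} My plan is to proceed by induction on $m$, simultaneously obtaining the $V_h$-norm bound \pref{bd1-nse} and accumulating the dissipation estimate \pref{bd2-nse}; the ``jump'' bound \pref{bd3-nse} then falls out by summation. The central manipulation is to test the scheme \pref{scheme} with $v_h = 2k A_h \uh{m-1}$, which is admissible since $\uh{m-1}\in D(A)_h$. Using the identity $(u, A_h v) = ((u,v))_h$ together with the polarization formula
\begin{equation*}
2((\uh{m}-\uh{m-1},\uh{m-1}))_h
 = \Vnorm{\uh{m}}^2 - \Vnorm{\uh{m-1}}^2 - \Vnorm{\uh{m}-\uh{m-1}}^2 ,
\end{equation*}
and noting $((\uh{m-1},A_h\uh{m-1}))_h = |A_h\uh{m-1}|^2$, the scheme becomes
\begin{equation*}
\Vnorm{\uh{m}}^2 - \Vnorm{\uh{m-1}}^2 + 2k\nu|A_h\uh{m-1}|^2
= \Vnorm{\uh{m}-\uh{m-1}}^2 - 2k\,b_h(\uh{m-1},\uh{m-1},A_h\uh{m-1}) + 2k(f^m,A_h\uh{m-1}).
\end{equation*}

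\noindent Next I would control the jump $\Vnorm{\uh{m}-\uh{m-1}}^2$ by choosing $v_h = \uh{m}-\uh{m-1}$ in \pref{scheme} and applying Cauchy--Schwarz with the bound $|B_h\uh{m-1}|\leq S_1(h) d_0 \Vnorm{\uh{m-1}}^2$ derived from \pref{buuvh} and \pref{d0}. This gives $|\uh{m}-\uh{m-1}|\leq k(\nu|A_h\uh{m-1}| + S_1(h)d_0\Vnorm{\uh{m-1}}^2 + |f^m|)$. Applying the inverse inequality $\Vnorm{\uh{m}-\uh{m-1}}\leq S(h)|\uh{m}-\uh{m-1}|$ and squaring (with the standard $(a+b+c)^2\leq 3(a^2+b^2+c^2)$) yields
\begin{equation*}
\Vnorm{\uh{m}-\uh{m-1}}^2 \leq 3k^2 S(h)^2\nu^2|A_h\uh{m-1}|^2
 + 3k^2 S(h)^2 S_1(h)^2 d_0^2 \Vnorm{\uh{m-1}}^4
 + 3k^2 S(h)^2|f^m|^2.
\end{equation*}
The first term is absorbed using assumption (1), since $3k^2 S(h)^2\nu^2 \leq \tfrac{3}{4}(1-\delta)k\nu$. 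The remaining nonlinear term $2k\,b_h(\uh{m-1},\uh{m-1},A_h\uh{m-1})$ and forcing term $2k(f^m,A_h\uh{m-1})$ are handled by \pref{buuvh} and Young's inequality with a small weight $\tfrac{k\nu\delta}{2}|A_h\uh{m-1}|^2$, so that after collecting all $|A_h\uh{m-1}|^2$ terms there remains a strictly positive coefficient of order $k\nu$ on the left.

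\medskip

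\noindent After these absorptions one arrives at an inequality of the schematic form
\begin{equation*}
\Vnorm{\uh{m}}^2 - \Vnorm{\uh{m-1}}^2 + c_\delta k\nu|A_h\uh{m-1}|^2
 \leq C_1 k\,S_1(h)^2 S(h)^2 \Vnorm{\uh{m-1}}^4 + C_2 k\,S_1(h)^2 d_0^2 \nu^{-1}\delta^{-1}\Vnorm{\uh{m-1}}^4 + C_3 k\nu^{-1}\delta^{-1}|f^m|^2,
\end{equation*}
where $c_\delta>0$. Under the inductive hypothesis $\Vnorm{\uh{j}}^2\leq d_5$ for $j\leq m-1$, assumption (3) precisely ensures that the quartic term $\Vnorm{\uh{m-1}}^4 = \Vnorm{\uh{m-1}}^2\cdot\Vnorm{\uh{m-1}}^2$ contributes a quantity controllable by $\tfrac{1}{2}d_0^2\,\nu^{-1}\int|f|^2$ (via the exact definition of $d_5$), so that telescoping from $m=1$ to $m=r$ preserves $\Vnorm{\uh{r}}^2\leq d_5$, closing the induction and establishing \pref{bd1-nse}. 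Summing the dissipation term over $m=1,\dots,N$ and using the initial condition then gives \pref{bd2-nse}, while summing the jump estimate (now with $|A_h\uh{m-1}|^2$ summable and $|f^m|^2$ integrable) together with assumption (2), $kS(h)^2\leq 1$, produces \pref{bd3-nse}.

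\medskip

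\noindent The main obstacle, as with fully explicit Navier--Stokes discretizations, is that the dissipation on the right is evaluated at $\uh{m-1}$ rather than $\uh{m}$, so the jump term $\Vnorm{\uh{m}-\uh{m-1}}^2$ carries the wrong sign and must be reabsorbed through the inverse inequality under a restrictive CFL constraint; this is the source of assumption (1). The second subtle point is the quartic nonlinear term $\Vnorm{\uh{m-1}}^4$ generated by the inverse-inequality estimate of the jump — it can only be tamed inductively by exploiting the very bound \pref{bd1-nse} one is trying to prove, which forces the seemingly unusual coupling of $d_5$ to the CFL condition in assumption (3). Calibrating $\delta$ and the Young weights so that all these absorptions close simultaneously is the tightest part of the calculation, but it is otherwise a routine energy-method argument in the spirit of~\cite{Temam01}.
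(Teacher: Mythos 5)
Your overall architecture matches the paper's: test \pref{scheme} with $v_h=A_h\uh{m-1}$ to obtain a discrete energy identity in the $\|\cdot\|_h$ norm, bound the jump $\Vnorm{\uh{m}-\uh{m-1}}^2$ separately through an inverse inequality, and close by induction using hypothesis (3). Your jump estimate (testing with $v_h=\uh{m}-\uh{m-1}$ and then applying $\Vnorm{w_h}\leq S(h)|w_h|$) is an acceptable variant of the paper's choice $v_h=A_h(\uh{m}-\uh{m-1})$ and produces the same three terms up to constants, with (1) absorbing the viscous part and (2) handling the forcing part.

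The genuine gap is in your treatment of the nonlinear term in the main energy identity. The paper never estimates $b_h(\uh{m-1},\uh{m-1},A_h\uh{m-1})$: it vanishes identically by the two-dimensional orthogonality \pref{buuAu}, $b(u,u,Au)=0$, which is why \pref{eq1pt9} carries no nonlinear contribution and why the only quartic remainder in the entire argument is the one generated by the jump estimate, $\Theta=4k^2d_0^2S_1^2(h)S^2(h)|\uh{m-1}|^2|A_h\uh{m-1}|^2$ --- a term already multiplied by $|A_h\uh{m-1}|^2$ and hence absorbable into the dissipation exactly when (3) holds. By instead applying \pref{buuvh} and Young's inequality to $2k\,b_h(\uh{m-1},\uh{m-1},A_h\uh{m-1})$, you create a second remainder of size $c\,kS_1^2(h)(\nu\delta)^{-1}|\uh{m-1}|^2\Vnorm{\uh{m-1}}^2$ that is \emph{not} multiplied by $|A_h\uh{m-1}|^2$; absorbing it into the dissipation forces you through the equivalence $\Vnorm{\cdot}\leq d_2|A_h\cdot|$ and requires a smallness condition on $kS_1^2(h)d_5d_2^2/(\nu\delta)^2$ that hypotheses (1)--(3) do not supply (condition (3) controls the combination $kS_1^2(h)S^2(h)$, which is not what appears here). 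Relatedly, your claim that under induction the quartic term becomes ``controllable by $\tfrac{1}{2}d_0^2\nu^{-1}\int|f|^2$ via the definition of $d_5$'' misreads how the induction closes: the quartic term is absorbed into $\tfrac{k\nu\delta}{2}|A_h\uh{m-1}|^2$ on the left-hand side, while $d_5$ enters only as the a priori bound $\Vnorm{\uh{m-1}}^2\leq d_5$ used to linearize that term; it is not traded against the forcing integral. If you invoke \pref{buuAu} --- which the paper's preliminaries set up for exactly this purpose --- the problematic term disappears and your argument collapses to the paper's.
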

\begin{proof}
We replace by $v_h$ by $A_h\uh{m-1}$ in \pref{scheme}; due to the identity
\begin{equation}\label{L2-identity}
2(a-b,b)=|a|^2-|b|^2-|a-b|^2,
\end{equation} 
we find
\begin{equation}\label{eq1pt9}
\aligned
\Vnorm{\uh{m}}^2-\Vnorm{\uh{m-1}}^2 -\Vnorm{\uh{m}-\uh{m-1}}^2 
&+ 2k\nu|A_h\uh{m-1}|^2 
= 2k(f^m, A_h\uh{m-1})
\\
& \leq 2kd_0|f^m||A_h\uh{m-1}| \\
& \leq k\nu|A_h\uh{m-1}|^2 + k\dfrac{d_0^2}{\nu}|f^m|^2.
\endaligned
\end{equation}
We would like to majorize $\Vnorm{\uh{m}-\uh{m-1}}^2$ in \pref{eq1pt9}.
Let $v_h = A_h\uh{m}-A_h\uh{m-1}$ in \pref{scheme}.
This gives
\begin{equation}\label{I123}
\aligned
2\Vnorm{\uh{m}-\uh{m-1}}^2 &= -2k\nu\paren{A_h\uh{m-1},A(\uh{m}-\uh{m-1})} 
\\
& \qquad - 2kb_h(\uh{m-1},\uh{m-1}, A_h{\uh{m}-\uh{m-1}}) 
\\
& \qquad + 2k(f^m, A_h(\uh{m}-\uh{m-1}))
=:I_1 + I_2 + I_3.
\endaligned
\end{equation}
We successively majorize $I_1$, $I_2$ and $I_3$ using \pref{reverse_poincare1}, \pref{reverse_poincare2}, \pref{buuvh}, and Cauchy-Schwarz inequality
\begin{eqnarray}
|I_1| &\leq & 2k\nu|A_h\uh{m-1}| |A_h(\uh{m}-\uh{m-1})|
\\
&\leq& 2k\nu S(h)|A_h\uh{m-1}|\Vnorm{\uh{m}-\uh{m-1}}
\\
&\leq& \dfrac{1}{4}\Vnorm{\uh{m}-\uh{m-1}}^2 + 4k^2\nu^2S^2(h)|A_h\uh{m-1}|^2,
\label{I1}
\end{eqnarray}
\begin{eqnarray}
|I_2| &\leq& 2kS_1(h)|\uh{m-1}|\Vnorm{\uh{m-1}} |A_h(\uh{m}-\uh{m-1})|
\\
&\leq& 2kS(h)S_1(h)|\uh{m-1}|\Vnorm{\uh{m-1}} \Vnorm{\uh{m}\uh{m-1}}
\\
&\leq&  \dfrac{1}{4}\Vnorm{\uh{m}-\uh{m-1}}^2 + 4k^2d_0S^2(h)S_1^2(h)|\uh{m-1}|^2\Vnorm{\uh{m-1}}^2
\\
&\leq& \dfrac{1}{4}\Vnorm{\uh{m}-\uh{m-1}}^2 + 4k^2d_0^2S^2(h)S_1^2(h)|\uh{m-1}|^2|A_h\uh{m-1}|^2,
\label{I2}
\end{eqnarray}
\begin{eqnarray}
|I_3|&\leq& 2k|f^m||A_h(\uh{m} - \uh{m-1})|
\leq 2k S(h) |f^m| \Vnorm{\uh{m} - \uh{m-1}}
\\
&\leq& \dfrac{1}{4}\Vnorm{\uh{m}-\uh{m-1}}^2 + 4k^2S^2(h)|f^m|.
\label{I3}
\end{eqnarray}
Denoting as $\Theta = 4k^2d_0^2S_1^2(h)S^2(h)|\uh{m-1}|^2|A_h\uh{m-1}|^2$,
we have that \pref{I123} becomes
\begin{equation}\label{eq1pt11}
\aligned
\Vnorm{\uh{m}-\uh{m-1}}^2&\leq 4k^2\nu^2S^(h) |A_h\uh{m-1}|^2
+ \Theta
\leq k\nu (1-\delta)|A_h\uh{m-1}|^2 
+ \Theta.
\endaligned
\end{equation}
Going back to \pref{eq1pt9} we have
\begin{equation}\label{eq1pt12}
\aligned
\Vnorm{\uh{m}}^2-\Vnorm{\uh{m-1}}^2 &+ k(\nu\delta-4kd_0^2S_1^2(h)S^2(h)|\uh{m-1}|^2) |A_h\uh{m-1}|^2
\\
& \leq k\left(\dfrac{d_0^2}{\nu}+4kS^2(h)\right)|f^m|^2
\\
&\leq kd_0^2\left(\dfrac{1}{\nu}+\dfrac{1-\delta}{\nu d_0^2}\right)|f^m|^2
\\
&\leq kd_0^2\left(\dfrac{d_0^2+1-\delta}{\nu d_0^2}\right)|f^m|^2.
\endaligned
\end{equation}
Summing up \pref{eq1pt12} for $m=1$ to $r$ we get
\begin{equation}\label{eq1pt13}
\Vnorm{\uh{r}}^2 + k\sum_{m=1}^r\left(\nu\delta-4kd_0^2S_1^2(h)S^2(h)|\uh{m-1}|^2\right)|A_h\uh{m-1}|^2\leq \mu_r,
\end{equation}
where
$$\mu_r = \Vnorm{\uh{0}}^2 + kd_0^2\left(\dfrac{d_0^2+1-\delta}{\nu d_0^2}\right)\sum_{m=1}^r|f^m|^2.$$
Now using condition $(iii)$ in Lemma \ref{lemma-nse}, we will prove by the method of induction that
\begin{equation}\label{eq1pt14}
\Vnorm{\uh{r}}^2 + \dfrac{k\nu\delta}{2}\sum_{m=1}^r|A_h\uh{m-1}|^2 \leq \mu_r,\quad r=1 \mbox{ to } N.
\end{equation}
First observe that
\begin{equation}
\aligned
\mu_r\leq\mu_N &= \|\uh{0}\|^2 + kd_0^2\left(\dfrac{d_0^2+1-\delta}{\nu d_0^2}\right)\sum_{m-1}^N|f^m|^2\\
&\leq \|u_0\|^2 + d_0^2\left(\dfrac{d_0^2+1-\delta}{\nu d_0^2}\right)\int_0^T|f(s)|^2\;ds =: d_5 
\endaligned
\end{equation}
To establish the basis for induction $(r=1)$ we write \pref{eq1pt12} for $m=1$ and use $(iii)$ in Lemma \ref{lemma-nse} to get
\begin{equation}
\aligned
\Vnorm{\uh{1}}^2 + k\nu\delta |A_h\uh{0}|^2 &\leq \Vnorm{\uh{0}}^2 + 4k^2d_0^2S_1^2(h)S^2(h)|\uh{0}|^2|A_h\uh{0}|^2
\\
& \qquad + kd_0^2\left(\dfrac{d_0^2+1-\delta}{\nu d_0^2}\right)|f^1|^2\\
&\leq \mu_1 + \dfrac{k\nu\delta}{2}|A_h\uh{0}|^2.
\endaligned
\end{equation}
Thus equation \pref{eq1pt14} for $r=1$ is satisfied.
By induction on $r$, assume that \pref{eq1pt14} holds up to the order $r-1$.
Note that by the recurrence hypothesis
\begin{equation}\label{eq1pt17}
\Vnorm{\uh{r-1}}^2\leq\mu_{r-1}\leq \mu_N \leq d_5.
\end{equation}
Thus by \pref{eq1pt13}, we have
\begin{equation}
\aligned
\Vnorm{\uh{r}}^2 + k\nu\delta\sum_{m=1}^r|A_h\uh{m-1}|^2 &\leq \mu_r+ 4k^2d_0^2S_1^2(h) S^2(h)|\uh{m-1}|^2\\
&\leq \mu_r+ 4k^2d_0^2S_1^2(h) S^2(h)d_5 \sum_{m=1}^r |A_h\uh{m-1}|^2\\
&\leq \mu_2 + \dfrac{k\nu\delta}{2} \sum_{m=1}^r |A_h\uh{m-1}|^2.
\endaligned
\end{equation}
Hence,
\begin{equation}
\Vnorm{\uh{r}}^2 + \dfrac{k\nu\delta}{2} \sum_{m=1}^r |A_h\uh{m-1}|^2\leq \mu_r.
\end{equation}
This gives \pref{bd2-nse}.
It remains to prove \pref{bd3-nse}.
From \pref{eq1pt11}, applying $(ii)$ and $(iii)$ from Lemma \ref{lemma-nse}, we get
\begin{equation}
\aligned
\Vnorm{\uh{m}-\uh{m-1}}^2&\leq k^2\nu^2S^(h) |A_h\uh{m-1}|^2 + 4k^2d_0^2S_1^2(h)S^2(h)|\uh{m-1}|^2|A_h\uh\
{m-1}|^2 \\
& \qquad + 4k^2S^2(h)|f^m|^2\\
&\leq k\nu (1-\delta)|A_h\uh{m-1}|^2 + 4k \dfrac{\nu\delta}{8d_5}|\uh{m-1}|^2|A_h\uh{m-1}|^2 + 4k|f^m|^2\\
&\leq k\nu (1-\delta)|A_h\uh{m-1}|^2 + k \nu\delta|A_h\uh{m-1}|^2 + 4k|f^m|^2\\
&\leq k\nu (2-\delta)|A_h\uh{m-1}|^2 + 4k|f^m|^2.
\
\endaligned
\end{equation}
Summing it up over all $m=1$ to $N$ and using \pref{bd2-nse}
we find \pref{bd3-nse}.
\end{proof}

\subsection{Stability Analysis of the Scheme based on 2D NSV}\label{subsec:stab_anal_nsv}
The preliminary setup is the same as those of 2D NSE in the previous section.
When $u_h^0,\dots,u_h^{m-1}$, are known, $u_h^m$ is the solution in $D(A)_h$ of
\begin{equation}\label{scheme-nsv}
\begin{split}
 \frac{1}{k}(\uh{m}-\uh{m-1},v_h) + \dfrac{\aa^2}{k}\paren{A_h\uh{m}-A_h\uh{m-1},v_h}
+ \nu((\uh{m-1},v_h))_h \\
+ b_h(\uh{m-1}, \uh{m-1},v_h) = (f_m,v_h),
\end{split}
\end{equation}
for all $v_h\in V_h$.

\begin{lemma}\label{lemma-nsv}
We assume that $k$ and $h$ satisfy
\begin{enumerate}
\item $kS^2(h)\leq \dfrac{1-\delta}{4\nu}$ for some $\delta$, $0<\delta<1$,
\item $kS_1^2(h)\leq \dfrac{\nu\delta}{8d_6}$,
\end{enumerate}
\begin{equation}
d_6=|u_0|^2+\aa^2\|u_0\|^2 + \paren{\dfrac{d_0^2}{\nu}+4T}\int_0^T|f(s)|^2 \;ds,
\end{equation}
and $d_0$ is as in (\ref{d0}),
then, the $\uh{m}$ given by \pref{scheme-nsv} remain bounded in the following sense
\begin{equation}\label{bd1-nsv}
|\uh{m}|^2+\aa^2\|\uh{m}\|_h^2\leq d_6 \quad m=1,\dots,N,
\end{equation}
\begin{equation}\label{bd2-nsv}
k\sum_{m=1}^r\Vnorm{\uh{m-1}}^2\leq\dfrac{2d_6}{\nu\delta},
\end{equation}
\begin{equation}\label{bd3-nsv}
\sum_{m=1}^N\paren{|\uh{m}-\uh{m-1}|^2+\aa^2\|\uh{m}-\uh{m-1}\|_h^2}\leq \left(\dfrac{2-\delta}{\delta}\right)d_6 + 4T \int_0^T|f(s)|^2\
\;ds.
\end{equation}
\end{lemma}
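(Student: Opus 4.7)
The plan is to mirror the argument of Lemma \ref{lemma-nse}, but to exploit the extra $\aa^2 A_h$ term by testing the scheme \pref{scheme-nsv} against $v_h=\uh{m-1}$ in place of $v_h=A_h\uh{m-1}$. Under this choice, the NSV term $\tfrac{\aa^2}{k}(A_h(\uh{m}-\uh{m-1}),\uh{m-1})$ becomes $\tfrac{\aa^2}{k}((\uh{m}-\uh{m-1},\uh{m-1}))_h$, and applying \pref{L2-identity} separately to the $L^2$ and $V_h$ inner products assembles the combined energy $E_m := |\uh{m}|^2 + \aa^2\Vnorm{\uh{m}}^2$. The trilinear term drops out for free since $b_h(u,u,u)=0$, so no analogue of \pref{buuAu} is needed. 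Bounding the forcing $2k(f^m,\uh{m-1})$ via \pref{d0} and Young by $k\nu\Vnorm{\uh{m-1}}^2 + (kd_0^2/\nu)|f^m|^2$, one arrives at
\begin{equation*}
E_m - E_{m-1} + k\nu\Vnorm{\uh{m-1}}^2 \leq \Delta E_m + \frac{kd_0^2}{\nu}|f^m|^2,
\end{equation*}
where $\Delta E_m := |\uh{m}-\uh{m-1}|^2 + \aa^2\Vnorm{\uh{m}-\uh{m-1}}^2$.

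To control $\Delta E_m$ I would test \pref{scheme-nsv} against $v_h=\uh{m}-\uh{m-1}$, which places $\Delta E_m/k$ on the left. The three terms on the right are majorized by Cauchy--Schwarz, \pref{reverse_poincare1}, the bound \pref{buuvh}, and Young (absorbing $\tfrac{3}{4}|\uh{m}-\uh{m-1}|^2$ on the left); hypothesis (i) then converts $4k^2\nu^2 S^2(h)$ into $k\nu(1-\delta)$, yielding
\begin{equation*}
\Delta E_m \leq k\nu(1-\delta)\Vnorm{\uh{m-1}}^2 + 4k^2 S_1^2(h)|\uh{m-1}|^2\Vnorm{\uh{m-1}}^2 + 4k^2|f^m|^2.
\end{equation*}
Substituting back and telescoping from $m=1$ to $r$ gives
\begin{equation*}
E_r + k\sum_{m=1}^{r}\bigl(\nu\delta - 4kS_1^2(h)|\uh{m-1}|^2\bigr)\Vnorm{\uh{m-1}}^2 \leq \mu_r,
\end{equation*}
with $\mu_r := E_0 + \sum_{m=1}^{r}k(d_0^2/\nu + 4k)|f^m|^2$. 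Jensen applied to the averages $f^m$ gives $\sum_m k^2|f^m|^2 \leq k\int_0^T|f|^2 \leq T\int_0^T|f|^2$, which is exactly where the factor $4T$ in the definition of $d_6$ is born, and one verifies $\mu_N \leq d_6$.

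The remaining work is an induction on $r$ to upgrade the bracketed coefficient to $\nu\delta/2$, yielding \pref{bd1-nsv} and \pref{bd2-nsv}: in the base case, hypothesis (ii) with $|\uh{0}|^2 \leq E_0 \leq d_6$ forces $4kS_1^2(h)|\uh{0}|^2 \leq \nu\delta/2$, and the inductive step reproduces this using $|\uh{m-1}|^2 \leq E_{m-1}\leq \mu_{m-1}\leq d_6$. Estimate \pref{bd3-nsv} then follows by returning to the $\Delta E_m$ bound, using hypothesis (ii) and \pref{bd1-nsv} to replace $4kS_1^2(h)|\uh{m-1}|^2$ by $\nu\delta/2$, summing, and applying \pref{bd2-nsv} together with $(1-\delta)+\delta/2 = (2-\delta)/2$ to produce the claimed coefficient. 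The principal obstacle, as in the NSE argument, is the circular appearance of $d_6$ inside hypothesis (ii); the induction closure just described is what resolves it. The structural payoff over Lemma \ref{lemma-nse} is that testing against $\uh{m-1}$ rather than $A_h\uh{m-1}$ produces only $S_1^2(h)$ in the trilinear estimate rather than $S_1^2(h)S^2(h)$, which is precisely why condition (iii) of Lemma \ref{lemma-nse} is absent here.
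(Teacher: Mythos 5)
Your proposal is correct and follows essentially the same route as the paper's proof: test \pref{scheme-nsv} against $\uh{m-1}$ to build the combined energy $|\uh{m}|^2+\aa^2\Vnorm{\uh{m}}^2$, test against $\uh{m}-\uh{m-1}$ to majorize the increment, then telescope and close the circular dependence on $d_6$ by induction using hypothesis \emph{(ii)}. Your closing observation --- that testing against $\uh{m-1}$ rather than $A_h\uh{m-1}$ yields only $S_1^2(h)$ in the trilinear estimate and thereby eliminates the analogue of condition \emph{(iii)} of Lemma \ref{lemma-nse} --- is exactly the point of the comparison the paper is making.
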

\begin{proof}
We replace by $v_h$ by $\uh{m-1}$ in \pref{scheme-nsv}; due to \pref{d0}, \pref{reverse_poincare1},\pref{L2-identity}  and Cauchy-Schwarz, we get
\begin{equation}
\begin{split}
|\uh{m}|^2-|\uh{m-1}|^2 & -|\uh{m}-\uh{m-1}|^2 + 2k\nu\Vnorm{\uh{m-1}}^2
\\
& \qquad +\aa^2\paren{\Vnorm{\uh{m}}^2-\Vnorm{\uh{m-1}}^2 -\Vnorm{\uh{m}-\uh{m-1}}^2 }\\
&= 2k(f^m, A_h\uh{m-1})
\\
& \leq 2kd_0|f^m|\Vnorm{\uh{m-1}}
\leq k\nu\Vnorm{\uh{m-1}}^2 + k\dfrac{d_0^2}{\nu}|f^m|^2,
\end{split}
\end{equation}
that is,
\begin{equation}\label{eq2pt8}
\begin{split}
\paren{|\uh{m}|^2-|\uh{m-1}|^2} &+\aa^2\paren{\Vnorm{\uh{m}}^2-\Vnorm{\uh{m-1}}^2} \\
& \qquad -\paren{|\uh{m}-\uh{m-1}|^2+\aa^2\Vnorm{\uh{m}-\uh{m-1}}^2} 
\\
& \leq k\dfrac{d_0^2}{\nu}|f^m|^2.
\end{split}
\end{equation}
We would like to majorize the term $\paren{|\uh{m}-\uh{m-1}|^2+\aa^2\Vnorm{\uh{m}-\uh{m-1}}^2}$ in \pref{eq2pt8}.
Let $v_h = \uh{m}-\uh{m-1}$ in \pref{scheme-nsv}.
This gives
\begin{equation}\label{I123-nsv}
\aligned
& 2|\uh{m}-\uh{m-1}|^2+2\aa^2\Vnorm{\uh{m}-\uh{m-1}}^2 \\ 
& = -2k\nu\left(\paren{\uh{m-1},\uh{m}-\uh{m-1}}\right)_h \\
& \qquad - 2kb_h(\uh{m-1},\uh{m-1},{\uh{m}-\uh{m-1}})
+ 2k(f^m, \uh{m}-\uh{m-1})
=:I_1 + I_2 + I_3.
\endaligned
\end{equation}
We successively majorize $I_1$, $I_2$ and $I_3$ using repeatedly \pref{reverse_poincare1}, \pref{buuvh}, and Cauchy-Schwarz inequality and get exactly the estimates as in page 234 of \cite{Temam01}.  Therefore
\pref{I123-nsv} becomes
\begin{equation}\label{eq2pt10}
\aligned
|\uh{m}-\uh{m-1}|^2+\aa^2\Vnorm{\uh{m}-\uh{m-1}}^2
&\leq 4k^2\nu^2S^(h) \Vnorm{\uh{m-1}}^2 \\
& + 4k^2S_1^2(h)|\uh{m-1}|^2\Vnorm{\uh{m-1}}^2 + 4k^2|f^m|^2\\
&\leq k\nu (1-\delta)\Vnorm{\uh{m-1}}^2 \\
& + 4k^2S_1^2(h)|\uh{m-1}|^2\Vnorm{\uh{m-1}}^2 + 4k^2|f^m|^2.
\endaligned
\end{equation}
Going back to \pref{eq2pt8} we have
\begin{equation}\label{eq2pt11}
\aligned
|\uh{m}|^2-|\uh{m-1}|^2 &+\aa^2\paren{\Vnorm{\uh{m}}^2-\Vnorm{\uh{m-1}}^2} 
\\
&\qquad + k(\nu\delta-4kS_1^2(h)|\uh{m-1}|^2) \Vnorm{\uh{m-1}}^2\\
&\leq k\left(\dfrac{d_0^2}{\nu}+4k\right)|f^m|^2\\
& \leq k\left(\dfrac{d_0^2}{\nu}+4T\right)|f^m|^2 \quad  \mbox{(since $k\leq T$).}
\endaligned
\end{equation}
Summing up \pref{eq2pt10} for $m=1$ to $r$ we get
\begin{equation}\label{eq2pt12}
|\uh{r}|^2+\aa^2\Vnorm{\uh{r}}^2 + k\sum_{m=1}^r\left(\nu\delta-4kS_1^2(h)|\uh{m-1}|^2\right)\Vnorm{\uh{m-1}}^2\leq \mu_r,
\end{equation}
where 
$$\mu_r = |\uh{0}|^2+\aa^2\Vnorm{\uh{0}}^2 + k\left(\dfrac{d_0^2}{\nu}+4T\right)\sum_{m=1}^r|f^m|^2.$$

Now using condition $(ii)$ in Lemma \ref{lemma-nsv}, we will prove by the method of induction that
\begin{equation}\label{eq2pt13}
|\uh{r}|^2+\aa^2\Vnorm{\uh{r}}^2 + \dfrac{k\nu\delta}{2}\sum_{m=1}^r\Vnorm{\uh{m-1}}^2 \leq \mu_r,\quad r=1 \mbox{ to } N.
\end{equation}

First observe that
\begin{equation}
\aligned
\mu_r\leq\mu_N &= |\uh{0}|^2+\aa^2\|\uh{0}\|^2 + k\left(\dfrac{d_0^2}{\nu } + 4T\right)\sum_{m-1}^N|f^m|^2 =: d_6.
\endaligned
\end{equation}
To establish the basis for induction $(r=1)$ we write \pref{eq2pt11} for $m=1$ and use $(ii)$ in Lemma \ref{lemma-nsv} to get
\begin{equation}
\aligned
& |\uh{1}|^2+\aa^2\Vnorm{\uh{1}}^2 + k\nu\delta \Vnorm{\uh{0}}^2
\leq |\uh{0}|^2+\aa^2\Vnorm{\uh{0}}^2, \\
& \qquad + 4k^2S_1^2(h)|\uh{0}|^2\Vnorm{\uh{0}}^2 + k\left(\dfrac{d_0^2}{\nu}+4T\right)|f^1|^2
\leq \mu_1 + \dfrac{k\nu\delta}{2}\Vnorm{\uh{0}}^2.
\endaligned
\end{equation}
Thus equation \pref{eq2pt13} for $r=1$ is satisfied.
By induction on $r$, assume that \pref{eq2pt13} holds up to the order $r-1$.
Note that by the recurrence hypothesis
\begin{equation}\label{eq2pt16}
|\uh{r-1}|^2+\aa^2\Vnorm{\uh{r-1}}^2\leq\mu_{r-1}\leq \mu_N \leq d_6.
\end{equation}
Thus by \pref{eq2pt12}, we have
\begin{equation}
\aligned
|\uh{r-1}|^2+\aa^2\Vnorm{\uh{r}}^2 + k\nu\delta\sum_{m=1}^r\Vnorm{\uh{m-1}}^2 &\leq \mu_r+ 4k^2S_1^2(h) |\uh{m-1}|^2|\uh{m-1}|^2\\
&\leq \mu_r+ 4k^2S_1^2(h) d_6 \sum_{m=1}^r \Vnorm{\uh{m-1}}^2\\
&\leq \mu_2 + \dfrac{k\nu\delta}{2} \sum_{m=1}^r \Vnorm{\uh{m-1}}^2.
\endaligned
\end{equation}
Hence,
\begin{equation}
|\uh{r}|^2+\aa^2\Vnorm{\uh{r}}^2 + \dfrac{k\nu\delta}{2} \sum_{m=1}^r \Vnorm{\uh{m-1}}^2\leq \mu_r.
\end{equation}
This gives us \pref{bd2-nsv}.
It remains to prove \pref{bd3-nsv}.
From \pref{eq2pt10} and applying condition $(i)$ and $(ii)$ from Lemma~\ref{lemma-nsv}, we get
\begin{equation}
\aligned
|\uh{m}-\uh{m-1}|^2+\aa^2\Vnorm{\uh{m}-\uh{m-1}}^2&\leq k\nu (1-\delta)\Vnorm{\uh{m-1}}^2 \\
& + 4k^2S_1^2(h)|\uh{m-1}|^2\Vnorm{\uh{m-1}}^2+4k^2 |f^m|^2\\
&\leq k\nu \left(1-\dfrac{\delta}{2}\right)\Vnorm{\uh{m-1}}^2 + 4kT|f^m|^2.
\endaligned
\end{equation}
Summing it up over all $m=1$ to $N$ and using \pref{bd2-nsv}
we find \pref{bd3-nsv}.
\end{proof}

\subsection{Stability Analysis of a Semi-Implicit Scheme based on 2D NSE}\label{subsec:stab_anal_nse_semi}

We present here general results for stability analysis for the full NSE equations vorticity formulation  (implicit only on the linear part of the operator).
The setup is as follows:
When $u_h^0,\dots,u_h^{m-1}$, are known, $u_h^m$ is the solution in $D(A)_h$ of
\begin{equation}\label{scheme-nsesi}
 \frac{1}{k}(\uh{m}-\uh{m-1},v_h) + \nu((\uh{m},v_h))_h + b_h(\uh{m-1}, \uh{m-1},v_h) = (f_m,v_h),
\end{equation}
for all $v_h\in V_h$.

\begin{lemma}\label{lemma-nsesi}
We assume that $k$ and $h$ satisfy
\begin{enumerate}
\item $kS^4(h)\leq d'$,
\item $kS_1^2(h)S^2(h)\leq d''$,
\end{enumerate}
where $d'=$ and $d''=$, then, the $\uh{m}$ given by \pref{scheme-nsesi} remain bounded in the following sense
\begin{equation}
\aligned
\Vnorm{\uh{m}}\leq d_7, \quad m &= 0,\dots, N,
\\
\sum_{m=1}^{N} \Vnorm{\uh{m}-\uh{m-1}}^2 \leq d_7,
&
\quad
\quad
k\sum_{m=1}^{N} |A_h\uh{m}|^2\leq d_7,
\endaligned
\end{equation}
where $d_7$ is some constant depending only on the data $d'$ and $d''$.
\end{lemma}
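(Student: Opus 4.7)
The plan is to adapt the arguments of Lemmas \ref{lemma-nse} and \ref{lemma-nsv}, exploiting the crucial fact that the linear viscous term in \pref{scheme-nsesi} is now implicit. This eliminates any CFL-type restriction on the viscous part (compare with the hypothesis $kS^2(h) \leq (1-\delta)/(4\nu)$ used in Lemma \ref{lemma-nse}), and leaves only the constraints needed to control the explicitly discretized nonlinear term.

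First I would derive an $L^2$-energy bound on $|\uh{m}|$ by testing \pref{scheme-nsesi} with $v_h = \uh{m}$. Using the identity \pref{L2-identity} together with the skew-symmetry rewrite $b_h(\uh{m-1},\uh{m-1},\uh{m}) = b_h(\uh{m-1},\uh{m-1},\uh{m}-\uh{m-1})$ (which relies on $b_h(u,v,v)=0$) and Young's inequality, the implicit viscous term puts $2k\nu\|\uh{m}\|_h^2$ on the left-hand side, and one obtains a telescoping inequality delivering a uniform bound $|\uh{m}|^2 \leq M$ with no restriction on $k$ versus $h$ required at this stage.

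Next I would establish the $H^1$-estimate by testing \pref{scheme-nsesi} with $v_h = A_h \uh{m}$. The implicit viscous term now yields $2k\nu |A_h\uh{m}|^2$ on the left-hand side unconditionally. Two complementary bounds on the nonlinear term are available: via \pref{buuvh},
\[
|b_h(\uh{m-1},\uh{m-1}, A_h\uh{m})| \leq S_1(h)\,|\uh{m-1}|\,\|\uh{m-1}\|_h\,|A_h\uh{m}|,
\]
and via continuity of $b_h$ together with an inverse inequality for $\|A_h\uh{m}\|_h$,
\[
|b_h(\uh{m-1},\uh{m-1}, A_h\uh{m})| \leq d_1 S(h)\,\|\uh{m-1}\|_h^2\,|A_h\uh{m}|.
\]
Young's inequality absorbs half of the $|A_h\uh{m}|^2$ into the viscous term. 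Using both forms of the nonlinear estimate, the inverse inequality $\|\uh{m-1}\|_h \leq S(h)|\uh{m-1}|$, and the $L^2$-bound $|\uh{m-1}|^2 \leq M$ from the first step, the residual nonlinear contribution is controlled by a term of the schematic form $(Ck/\nu)[S_1^2(h)S^2(h) + S^4(h)]M^2 \|\uh{m-1}\|_h^2$. Under conditions (i) $kS^4(h)\leq d'$ and (ii) $kS_1^2(h)S^2(h)\leq d''$, with $d'$ and $d''$ chosen in terms of $M$ and $\nu$, this coefficient is kept uniformly bounded, and a discrete induction patterned on that of Lemma \ref{lemma-nse} yields $\|\uh{m}\|_h^2 \leq d_7$. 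Summation over $m=1,\dots,N$ then produces the remaining bounds $\sum_{m=1}^N\|\uh{m}-\uh{m-1}\|_h^2 \leq d_7$ and $k\sum_{m=1}^N|A_h\uh{m}|^2 \leq d_7$.

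The main obstacle is not the linear viscous part, which is unconditionally stable thanks to the implicit treatment, but rather the control of the explicit nonlinear term through the induction. The technical work lies in the careful splitting of this contribution using both forms of the trilinear estimate, and in identifying which combinations close the induction; conditions (i) and (ii) are exactly the sharp requirements emerging from this splitting. Crucially, no restriction of the form $kS^2(h) \leq (1-\delta)/(4\nu)$ on the viscous part is needed, which is precisely the theoretical counterpart of the numerical observation that the semi-implicit scheme tolerates substantially larger time steps than the fully explicit one.
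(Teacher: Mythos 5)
Your overall strategy (test with $A_h\uh{m}$, exploit the implicit viscous term, use the trilinear estimates plus an induction to close) is the right family of argument, but there is a genuine gap in how you handle the explicit nonlinear term, and it is precisely the step that makes the lemma work. You estimate $b_h(\uh{m-1},\uh{m-1},A_h\uh{m})$ directly and use Young's inequality to absorb half of $|A_h\uh{m}|^2$ into the dissipation. Whichever of your two trilinear bounds you use, the leftover term then carries only a single net power of $k$ against a quantity like $S_1^2(h)|\uh{m-1}|^2\Vnorm{\uh{m-1}}^2$ or $S^2(h)\Vnorm{\uh{m-1}}^4$. Under hypotheses (i)--(ii) this leftover is $O(1)$ per time step (the $k$ is ``used up'' by the condition $kS_1^2(h)S^2(h)\leq d''$ or $kS^4(h)\leq d'$), so summing over $N=T/k$ steps diverges; alternatively, if you keep it as $k\cdot(\text{coefficient})\cdot\Vnorm{\uh{m-1}}^2$ and try discrete Gronwall, the coefficient involves $S_1^2(h)$ or $S^4(h)$ and blows up as $h\to 0$, and if you try to absorb it into $k\nu\sum|A_h\uh{m}|^2$ you need $S_1^2(h)$ (or $S^4(h)$) bounded independently of $h$, which fails. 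So the induction as you describe it does not close.

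The paper's proof supplies the missing idea: it first uses the two-dimensional orthogonality $b_h\bigl(\uh{m-1},\uh{m-1},A_h\uh{m-1}\bigr)=0$ (the identity \pref{buuAu}, special to the 2D periodic setting --- not a consequence of $b(u,v,v)=0$) to rewrite the nonlinear term as $b_h\bigl(\uh{m-1},\uh{m-1},A_h(\uh{m}-\uh{m-1})\bigr)$, i.e.\ tested against the \emph{increment}. It then applies Young against the term $\Vnorm{\uh{m}-\uh{m-1}}^2$, which the identity \pref{L2-identity} places on the left-hand side with a \emph{favorable} sign because the scheme is tested with the new iterate $A_h\uh{m}$ rather than $A_h\uh{m-1}$. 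This produces the residual $2k^2S_1^2(h)S^2(h)|\uh{m-1}|^2\Vnorm{\uh{m-1}}^2$ with an extra factor of $k$, and it is exactly this extra factor that lets conditions (i)--(ii), through the intermediate requirement \pref{5pt51}, absorb the accumulated residual into $k\nu\sum_m|A_h\uh{m}|^2$ and close the induction. Two smaller points: the paper needs no preliminary $L^2$ step (it uses $|\uh{m-1}|\leq d_0\Vnorm{\uh{m-1}}$ inside the $H^1$ induction), and your claim that the $L^2$ bound comes ``with no restriction on $k$ versus $h$'' is also not right --- the nonlinear term is still explicit there, so closing that estimate again requires a condition of type (i).
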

\begin{proof}
We let $v_h = A_h\uh{m}$ in \pref{scheme-nsesi}.
Using the identity \pref{L2-identity}, we get
\begin{equation}
\aligned
\Vnorm{\uh{m}}^2-\Vnorm{\uh{m-1}} &+ \Vnorm{\uh{m}-\uh{m-1}}^2 + 2k\nu|A_h\uh{m}|^2 \\
&= -2kb_h\left(\uh{m-1},\uh{m-1},A_h\uh{m}\right)
+ 2k(f^m, A_h\uh{m})\\
&:= I_1 + I_2.
\endaligned
\end{equation}
We would like bound the terms $I_1$ and $I_2$.
The identity $b_h\left(\uh{m-1},\uh{m-1},A\uh{m-1}\right)=0$ 
together with \pref{d0} and \pref{buuvh} gives
\begin{equation}
\aligned
|I_1| &\leq 2kS_1(h)|\uh{m-1}|\Vnorm{\uh{m-1}}|A(\uh{m}-\uh{m-1})|\\
&\leq 2k S_1(h)S(h)|\uh{m-1}|\Vnorm{\uh{m-1}}\Vnorm{\uh{m}-\uh{m-1}}\\
&\leq 2k^2S_1^2(h) S^2(h) |\uh{m-1}|^2\Vnorm{\uh{m-1}}^2 + \dfrac{1}{2}\Vnorm{\uh{m}-\uh{m-1}},
\endaligned
\end{equation}
and
\begin{equation}
|I_2| \leq 2kd_0|f^m||A_h\uh{m}|
\leq k \left(\dfrac{d_0^2}{\nu}|f_m|^2+\nu|A\uh{m}|^2\right).
\end{equation}
Hence,
\begin{equation}\label{5pt48}
\begin{split}
\Vnorm{\uh{m}}^2-\Vnorm{\uh{m-1}}^2 & + \frac{1}{2}\Vnorm{\uh{m}-\uh{m-1}}^2 + k\nu|A_h\uh{m}|^2 \\
& -2k^2S_1^2(h)S^2(h)|\uh{m-1}|^2\Vnorm{\uh{m-1}}^2\leq \dfrac{kd_0^2}{\nu}|f^m|^2.
\end{split}
\end{equation}
We sum up these inequalities for $m=1$ to $r$, to get
\begin{equation}\label{5pt49}
\begin{split}
\Vnorm{\uh{r}}^2 & + \frac{1}{2}\sum_{m=1}^r\Vnorm{\uh{m}-\uh{m-1}}^2 + k\nu\sum_{m=1}^r |A_h\uh{m}|^2 \\
& \qquad -2k^2S^2_1(h) S^2(h)\sum_{m=2}^r|\uh{m-1}|^2\Vnorm{\uh{m-1}}^2\leq \lambda_r,
\end{split}
\end{equation}
where,
\begin{equation}
\lambda_r = \Vnorm{\uh{0}} = \dfrac{kd_0^2}{\nu}\sum_{m=1}^r |f^m|^2 + 2k^2S^2_1(h) S^2(h)|\uh{0}|^2\Vnorm{\uh{0}}^2.
\end{equation}
We assume that 
\begin{equation}\label{5pt51}
2kd_0d_2S_1^2(h)S^2(h)\lambda_N \leq \nu-\delta,
\end{equation}
for some fixed $\delta$, $0\leq\delta\leq\nu$.
If this holds then one can show recursively that
\begin{equation}\label{5pt52}
\Vnorm{\uh{r}}^2 + \frac{1}{2}\sum_{m-1}^r\Vnorm{\uh{m}-\uh{m-1}}^2+k\delta\sum_{m=1}^r|A_h\uh{m}|^2\leq \lambda_r, \quad r=1,\dots, N.
\end{equation}
Clearly, letting $m=1$ in \pref{5pt48} shows that \pref{5pt52} is  true for $r=1$.
Let us assume that \pref{5pt52} is valid up to the order $r-1$, we want to show that \pref{5pt52} is valid for $r$.
Observe that by the inductive hypothesis $\Vnorm{\uh{m}}^2\leq\lambda_m\leq\lambda_N$.
Hence, by the condition \pref{5pt51}
\begin{equation}
\aligned
2k^2S_1^2(h)S^2(h)\sum_{m=2}^r|\uh{m-1}|\Vnorm{\uh{m-1}}^2 &\leq 2d_2k^2S_1^2(h)S^2(h)\sum_{m=2}^r|\uh{m-1}||A_h\uh{m-1}|^2\\
&\leq 2d_0d_2k^2S_1^2(h)S^2(h)\lambda_N\sum_{m-1}^r|A_h\uh{m-1}|^2\\
&\leq k(\nu-\delta)\sum_{m=1}^r|A_h\uh{m-1}|^2.
\endaligned
\end{equation}
We apply this upper bound into \pref{5pt49}, we get \pref{5pt52} for the integer $r$.
To complete the proof it suffices to show that  conditions $(i)$ and $(ii)$ in Lemma \ref{lemma-nsesi} ensure the condition \pref{5pt51}.
We recall that
since $\|\uh{0}\|\leq\|u_0\| $ for all $h$ and
$$k\sum_{m=1}^N|f^m|^2\leq \int_0^T |f(s)|^2\;ds,$$
then,
\begin{equation}
\aligned
\lambda_N &\leq \|u_0\|^2 + \frac{d_0^2}{\nu}\int_0^T||f(s)|^2\;ds + 2k^2S_1^2(h)S^2(h)|u_0|^2\Vnorm{\uh{0}}^2\\
&\leq d_{10} + 2k^2S_1^2(h)S^2(h)|u_0|^2\|u_0\|^2.
\endaligned
\end{equation}
Hence, if $kS_1^2S^2(h)\leq d'$, then 
\begin{equation}
2kd_0d_2S_1^2(h)S^2(h)\lambda_N \leq 2d'(d_{10}+2d'd''|u_0|^2\|u_0\|^2)
\leq \nu-\delta,
\end{equation}
provided $d',\; d''$ are sufficiently small.
\end{proof}

\section{Summary}\label{conclusion}

 The NSV model of viscoelastic incompressible fluid has been proposed as a regularization of the 3D NSE for purposes of direct numerical simulation.  In this work, we have shown one of the benefits of using the 2D NSV turbulence model for small regularization parameter $\alpha$, instead of the 2D NSE to reduce computational expense when automating the inpainting process.  
To be more precise, one can find a parameter $\alpha>0$ in which the 2D NSV gives a solution to the image inpainting problem comparable (both using subjective and objective measure) to that of the solution of the 2D NSE but only requires a time step much larger in comparison to that of 2D NSE.  That is, the 2D NSV converge to the steady state solution with a much larger time step and hence solves the image inpainting problem using less computational resources.  In the numerical experiments, we found that after accounting for the relative costs of the two methods, the 2D NSV gives a solution to the inpainting problem which matches the quality of the image produced by when NSE is used but using less resources.

In future work we would like to investigate other PDEs which can be used instead of the 2D NSE and 2D NSV when solving the inpainting process.  In particular, we would like to use a PDE to solve the inpainting problem without the addition of anisotropic diffusion.  The dependence of the stability of solutions on certain characteristics of the image near the boundary is also of major interest in this topic.  We would like to investigate the dependence of the uniqueness of steady solution on the viscosity, on the image at the boundary, and on the size of the inpainting region.  We have presented some basic results on the uniqueness of steady state solution of the 2D steady state applied in the context of image inpainting.  We would like to do further numerical experiments to verify these hypothesis.

\section{Acknowledgments}

We would like to thank E.S. Titi, D. Reynolds, and Y. Zhou for valuable discussions on discretization techniques and other helpful comments on this study.
In particular, we would like to thank E.S. Titi for showing us the proof of the dependence of the uniqueness of the inpainting solution on the viscosity, on the image at the boundary and the size of inpainting region. 
MH was supported in part by NSF Awards~0715146 and 0915220.
ME and EL were supported in part by NSF Award~0715146.

\bibliographystyle{siam}
\bibliography{../bib/papers,../bib/books}

\end{document}